\newtheorem{theorem}{Theorem}[section]
\newtheorem{lemma}[theorem]{Lemma}
\newcommand{\bw}{{\boldsymbol w}}
\newcommand{\bx}{{\boldsymbol x}}
\newcommand{\bs}{{\boldsymbol s}}
\newcommand{\by}{{\boldsymbol y}}
\newcommand{\bp}{{\boldsymbol p}}
\newcommand{\bd}{{\boldsymbol d}}
\newcommand{\bu}{\mathbf{u}}
\newcommand{\R}{\mathbb{R}}
\newcommand{\diag}{\text{diag}}
\newcommand{\TheTitle}{A Distributed Quasi-Newton Algorithm for
Empirical Risk Minimization with Nonsmooth Regularization}
\newcommand{\RunningTitle}{Distributed Proximal LBFGS}
\begin{document}
\title[\RunningTitle]{{\TheTitle}}
\author{Ching-pei Lee}
\affiliation{%
	\institution{Department of Computer Sciences\\ University
	of Wisconsin-Madison}
	\streetaddress{1210 W. Dayton St}
	\city{Madison}
	\state{Wisconsin}
	\postcode{53706}
}
\email{ching-pei@cs.wisc.edu}

\author{Cong Han Lim}
\affiliation{%
	\institution{Wisconsin Institute for Discovery\\ University
	of Wisconsin-Madison}
	\streetaddress{330 N. Orchard St}
	\city{Madison}
	\state{Wisconsin}
	\postcode{53715}
}
\email{clim9@wisc.edu}

\author{Stephen~J. Wright}
\affiliation{%
	\institution{Department of Computer Sciences\\ University
	of Wisconsin-Madison}
	\streetaddress{1210 W. Dayton St}
	\city{Madison}
	\state{Wisconsin}
	\postcode{53706}
}
\email{swright@cs.wisc.edu}

\begin{abstract}

We propose a communication- and computation-efficient distributed
optimization algorithm using second-order information for solving ERM
problems with a nonsmooth regularization term.
Current second-order and quasi-Newton methods for this
problem either do not work well in the distributed setting or
work only for specific regularizers. Our algorithm uses successive
quadratic approximations, and we describe how to maintain an
approximation of the Hessian and solve subproblems efficiently in a
distributed manner.
The proposed method enjoys global linear convergence for
a broad range of non-strongly convex problems that includes the most
commonly used ERMs, thus requiring lower communication complexity.
It also converges on non-convex problems, so has the
potential to be used on applications such as deep learning.
Initial computational results on convex problems demonstrate that our
method significantly improves on communication cost and running time
over the current state-of-the-art methods.

\end{abstract}

\copyrightyear{2018}
\acmYear{2018}
\setcopyright{acmlicensed}
\acmConference[KDD '18]{The 24th ACM SIGKDD International
Conference on Knowledge Discovery \& Data Mining}{August
19--23,2018}{London, United Kingdom}
\acmBooktitle{KDD '18: The 24th ACM SIGKDD International
	Conference on Knowledge Discovery \& Data Mining, August 19--23,
2018, London, United Kingdom}
\acmPrice{15.00}
\acmDOI{10.1145/3219819.3220075}
\acmISBN{978-1-4503-5552-0/18/08}

\begin{CCSXML}
	<ccs2012>
	<concept>
	<concept_id>10010147.10010257.10010321</concept_id>
	<concept_desc>Computing methodologies~Machine learning
	algorithms</concept_desc>
	<concept_significance>500</concept_significance>
	</concept>
	<concept>
	<concept_id>10010147.10010919.10010172</concept_id>
	<concept_desc>Computing methodologies~Distributed
	algorithms</concept_desc>
	<concept_significance>500</concept_significance>
	</concept>
	</ccs2012>
\end{CCSXML}

\ccsdesc[500]{Computing methodologies~Machine learning algorithms}
\ccsdesc[500]{Computing methodologies~Distributed algorithms}
\keywords{Distributed optimization, Empirical risk
minimization, Nonsmooth optimization, Regularized optimization,
Variable metrics, Quasi-Newton, Proximal method, Inexact method}

\maketitle
\section{Introduction}
\label{sec:intro}
We consider solving the following regularized problem in a distributed
manner:
\begin{equation}
\min_{\bw \in \R^d} \quad F(\bw)  \coloneqq
	f(X^T \bw) + g(\bw),
\label{eq:f}
\end{equation}
where $X$ is a $d$ by $n$ real-valued matrix, $g$ is a convex, closed,
and extended-valued proper function that can be nondifferentiable, and
$f$ is a differentiable function whose gradient is Lipschitz
continuous with parameter $L > 0$. Each column of $X$ represents a
single data point or instance, and we assume that the set of data
points is partitioned and spread across $K$ machines (i.e.\
distributed \emph{instance-wise}). We can write $X$ as
\begin{equation*}
	X \coloneqq \left[X_1, X_2,\dotsc, X_K \right],
\end{equation*}
where $X_k$ is stored exclusively on the $k$th machine. We further
assume that $f$ shares the same block-separable structure and
can be written as follows:
\begin{equation*}
	f\left( X^T \bw \right) = \sum_{k=1}^K f_k \left(X_k^T \bw\right).
\end{equation*}
Unlike our instance-wise setting, some existing works consider the \emph{feature-wise} partition
setting, under which  $X$ is partitioned by rows rather than columns.
% We focus on the instance-wise setting because
Although the feature-wise setting is a simpler one for algorithm
design when $g$ is separable, storage of different features on different
machines is often impractical.
%In particular, it is usually easier to generate new data
%points than to generate new features,
% (which requires tracing back historical information of existing 
% data points, which can sometimes be difficult), 
%and each time new data points are obtained from one location, one
%needs to distribute their entries to different machines under the
%feature-wise setting.

% This formulation encompasses the most common scenario of distributed
% regularized empirical risk minimization (ERM) problems such that the
% data points are instance-wisely distributed, in contrast to the
% possibly easier to solve feature-wise storage scenario (which means in
% our notation \eqref{eq:f} that each row of $X$ is available on one
% machine).
% Our algorithm framework, after some proper modifications, can also
% be applied to the feature-wise storage setting, or, more realistically,
% the instance-wise storage scenario of the dual distributed ERM problem.
% However, because of the space limitation, this adaptation will not be
% covered in this work.

The bottleneck in performing distributed optimization is often the
high cost of communication between machines. For \eqref{eq:f},
the time required to retrieve $X_k$ over a network can greatly exceed
the time needed to compute $f_k$ or its gradient with locally stored
$X_k$.  Moreover, we incur a delay at the beginning of each round of
communication due to the overhead of establishing connections between
machines. This latency prevents many efficient single-core algorithms
such as coordinate descent (CD) and stochastic gradient and their
asynchronous parallel variants from being employed in large-scale
distributed computing setups. Thus, a key aim of algorithm design for
distributed optimization is to improve the communication efficiency
while keeping the computational cost affordable. Batch methods are
preferred in this context, because fewer rounds of communication occur
in distributed batch methods.

% processing the same amount of information locally and thus it is
% impractical to access columns of $X$ not stored locally.
% Because inter-machine communication is expensive and often
% constitutes the bottleneck in distributed optimization,

% A critical research direction of distributed optimization is therefore
% improving communication efficiency, while keeping the overall
% computational cost affordable. Note that the communication cost is not
% only linear to the amount of data transferred, but also linear to the
% number of times communication is conducted, as there is a latency to
% initiate communication every time. 

% This latency cost hinders many
% efficient single-core algorithms that update the iterates frequently,
% such as coordinate descent (CD), stochastic gradient and their
% asynchronous parallel variants, being employed in large-scale
% distributed optimization problems. On the other hand, batch methods
% are favorable here as the iterate is updated less frequently and most
% computation can be conducted locally. This type of methods is also a
% better fit of commonly used distributed frameworks such as MapReduce
% \citep{DeaG08a} and MPI \citep{MPI94a}.

When $F$ is differentiable, many efficient batch methods can be used
directly in distributed environments to solve \eqref{eq:f}.
For example, Nesterov's accelerated gradient (AG) \citep{Nes83a}
enjoys low iteration complexity, and since each iteration of AG only
requires one round of communication to compute the new gradient, it
also has good communication complexity.
% which can be roughly interpreted as the
% communication complexity in the distributed setting, as each iteration
% of AG only takes one round of communication to compute the new
% gradient.
Although its supporting theory is not particularly strong, the
limited-memory BFGS (LBFGS) method \citep{LiuN89a} is popular among
practitioners of distributed optimization.
%% Although it requires twice-differentiability (in theory) and the exact
%% iteration complexity is unknown, the quasi-Newton-type method
%% limited-memory BFGS (LBFGS) \citep{LiuN89a} is very popular among
%% practitioners of distributed optimization.
It is the default algorithm for solving $\ell_2$-regularized smooth
ERM problems in Apache Spark's distributed machine learning library
\citep{Men16a}, as it is empirically much faster than AG (see, for
example, the experiments in \citet{WanLL16a}). Other modified batch
methods that utilize the Hessian of the objective in various ways are
also communication-efficient under their own additional assumptions
\citep{ShaSZ14a, ZhaL15a, LeeWCL17a,ZhuCJL15a,LinTLL14a}.

When $g$ is nondifferentiable, neither LBFGS nor Newton's method
can be applied directly.  Leveraging curvature information from $f$
can still be beneficial in this setting. For example, the orthant-wise
quasi-Newton method OWLQN \citep{AndG07a} adapts the LBFGS algorithm
to the special nonsmooth case in which $g(\cdot) \equiv \|\cdot\|_1$,
and is popular for distributed optimization of
$\ell_1$-regularized ERM problems.  Extension of this
approach to other nonsmooth $g$ is not well understood, and the
convergence guarantees are only asymptotic, rather than global.
%There are also other distributed optimization algorithms
%specifically designed for $\ell_1$-regularized problems, such as
%\cite{BiaLLY13a, MahKS17a} but most are under the rather impractical
%feature-wise storage setting mentioned above.
%
%\CONG{Maybe briefly mention feature-wise methods that are more general
%than $\ell_1$? The next paragraph has to be modified slightly 
%depending on how the previous is written, since OWLQN for example
%uses Hessian info.}

To the best of our knowledge, for ERMs with \emph{general} nonsmooth
regularizers in the instance-wise storage setting,
proximal-gradient-like methods \citep{WriNF09a, Nes13a} are the only
practical distributed optimization algorithms with convergence
guarantees. Since these methods barely use the Hessian information of
the smooth part (if at all), we suspect that proper utilization of
second-order information has the potential to improve convergence
speed and therefore communication efficiency dramatically.
% it is also expected that if one manages to utilize the curvature
% information of $f$ when $g$ is present, the iteration complexity in
% practice can be improved. 
We thus propose a practical distributed inexact variable-metric
algorithm for general \eqref{eq:f} which uses gradients and which updates
information from previous iterations to estimate curvature of the
smooth part $f$ in a communication-efficient manner. We describe
construction of this estimate and solution of the corresponding
subproblem. We also provide convergence rate guarantees, which also
bound communication complexity.  These rates improve on existing
distributed methods, even those tailor-made for specific regularizers.

% Although the theoretical guarantees for our algorithm is directly from
% the analysis in \cite{LeeW18a} for a general algorithmic framework,
% our major contribution in this work is to give a feasible and
% practical choice to realize the framework into an efficient algorithm
% for distributed regularized optimization that works for many
% large-scale ERM problems that are still hard to optimize in
% distributed environments.

Our algorithm leverages the more general framework provided in
\citet{LeeW18a}, and our major contribution in this work is to describe
how the main steps of the framework can be implemented efficiently in
a distributed environment. Our approach has both good communication
and computational complexity, unlike certain approaches that  focus only
on communication at the expense of computation (and ultimately overall
time).
%Convergence guarantees for our method are
%directly derived from the general framework.
We believe that this work is the first to propose, analyze, and
implement a practically feasible distributed optimization method for
solving \eqref{eq:f} with general nonsmooth regularizer $g$ under the
instance-wise storage setting.
% There are some works like \cite{RicT16a, MarRT15a} that have considered
% the feature-wise partition setting that is rather impractical as we
% discussed above.
% Moreover, those methods considering the row-wise storage scenario use
% only the block-diagonal parts of $\nabla^2 f$ to form the quadratic
% approximation.
% Moreover, unlike some theoretical distributed approaches for smooth
% problems that only focus on the
% communication complexity and sacrifice the computation cost, resulting
% in communication-efficient but overall inefficient methods,
% we also discuss how to trade between communication and computation in
% solving the subproblems formed at each iteration, giving a low per
% iteration computational cost.
% The final result is an efficient distributed optimization algorithm
% for general regularized problems, that can even empirically outperform
% state of the art methods specifically designed for a specific problem,
% even when we do not add any acceleration heuristics.

% The paper is organized as follows.
Our algorithm and implementation details are given in
Section~\ref{sec:alg}.  Communication complexity and the effect of the
subproblem solution inexactness are analyzed in
Section~\ref{sec:analysis}.  Section~\ref{sec:related} discusses
related works, and empirical comparisons are conducted in
Section~\ref{sec:exp}.  Concluding observations appear in
Section~\ref{sec:conclusions}.

\subsection*{Notation}
We use the following notation.
\begin{itemize}[
topsep=0pt,
partopsep=0pt,
leftmargin=*,
itemsep=0pt,
parsep=0pt
]
\item $f(X^T \bw)$ is abbreviated as $\tilde f(\bw)$.
\item $\|\cdot\|$ denotes the 2-norm, both for vectors and for matrices.
\item Given any symmetric positive semi-definite matrix $H \in
	\R^{d\times d}$ and any vector $\bp \in \R^d$,
	$\|\bp\|_H$ denotes the semi-norm $\sqrt{\bp^T H \bp}$.
%\item The columns of $X_k$ are indexed by $J_k$.
%	That is, $\cup J_k = \{1,\dotsc, n\}$ and $J_i \cap J_j = 0$ for
%	all $i \neq j$.
\end{itemize}

\section{Algorithm}
\label{sec:alg}
\iffalse Because the high cost of inter-machine communication, we have
to limit the amount of data being transmitted.  In particular, when
the data size is large, which is usually the scheme of distributed
optimization, we cannot send out all the data points in $X$.  This
means one cannot easily utilize the Hessian of $f(X^T \bw)$ to obtain
better updates.  In particular, when $F$ is smooth, methods like
conjugate-gradient (CG) can be used to conduct Hessian-vector products
by communicating just one vector per CG iteration, making the
communication cost acceptable.  However, when $g$ is nonsmooth, these
methods cannot be adopted.  Therefore, instead of using the real
Hessian, we will only synchronize the machines by communicating
$\nabla \tilde f(\bw)$, which is essential in deciding the update
direction and the optimality.  Nonetheless, we still want to extract
curvature information from the smooth term as much as possible to
improve the communication efficiency.  \fi

At each iteration of our algorithm for optimizing \eqref{eq:f}, we
construct a subproblem that consists of a quadratic approximation of
$\tilde f$ added to the original regularizer $g$.  Specifically, given
the current iterate $\bw$, we choose a positive semi-definite $H$ and
define
\begin{equation*}
Q_H(\bp;\bw) \coloneqq \nabla \tilde f(\bw)^T
\bp + \frac12 {\|\bp\|_H^2}+ g(\bw + \bp) - g(\bw),
\end{equation*}
the update direction is obtained by approximately solving
\begin{equation}
\min_{\bp \in \R^d}\quad Q_H(\bp;\bw).
\label{eq:quadratic}
\end{equation}
A line search procedure determines a suitable stepsize $\alpha$, and
we perform the update $\bw \leftarrow \bw + \alpha \bp$.

We now discuss the following issues in the distributed setting:
communication cost, the computation of $\nabla \tilde f$, the choice
and construction of $H$, procedures for solving \eqref{eq:quadratic},
and the line search procedure.
In our description, we sometimes need to split some $n$-dimensional
vectors over the machines, in accordance with the following disjoint
partition $J_1,\dotsc,J_k$ of $\{1,\dotsc,d\}$:
\begin{equation*}
	J_i \cap J_k = \phi, \forall i \neq k,\quad \cup_{i=1}^K J_i =
	\{1,\dotsc,d\}.
\end{equation*}
\subsection{Communication Cost}
For the ease of description, we assume the {\em allreduce} model of
MPI \citep{MPI94a}, but it is also straightforward to extend the
framework to a master-worker platform.
Under this
model, all machines simultaneously fulfill master and worker roles,
and any data transmitted is broadcast to all machines.
% usually in the sense of a vector sum,
% through an {\em allreduce} operation.
This can be considered as equivalent to conducting one map-reduce
operation and then broadcasting the result to all nodes.  The
communication cost for the allreduce operation on a $d$-dimensional
vector under this model is
\begin{equation}
	\log \left( K \right) T_{\text{initial}} + d T_{\text{byte}},
	\label{eq:commcost}
\end{equation}
where $T_{\text{initial}}$ is the latency to establish connection
between machines, and $T_{\text{byte}}$ is the per byte transmission
time (see, for example, \citet[Section 6.3]{ChaHPV07a}).

The first term in \eqref{eq:commcost} also explains why batch methods
are preferable. Even if methods that frequently update the iterates
communicate the same amount of bytes, it takes more rounds of
communication to transmit the information, and the overhead of
$\log (K) T_{\text{initial}}$ incurred at every round of communication
makes this cost dominant, especially when $K$ is large.

%\STEVE{Is this right? Log(K)
%  grows only slowly. Then in the next paragraph you say that this term
%  is ignored!}

In subsequent discussion, when an allreduce operation is performed on
a vector of dimension $O(d)$, we simply say that $O(d)$ communication
is conducted. We omit the latency term since batch methods like ours
tend to have only a small constant number of rounds of communication
per iteration. By contrast, non-batch methods such as CD
or stochastic gradient require $O(d)$ or $O(n)$ rounds of
communication per epoch and therefore face much more significant
latency issues.

\subsection{Computing $\nabla \tilde f$}
The gradient of $\tilde f$ has the form
% \begin{equation*}
% \nabla \tilde f(\bw) = X \nabla f(X^T \bw),
% \end{equation*}
% and from the block separability assumption of $f$, we can further
% decompose it into
\begin{equation}
\nabla \tilde f(\bw) = X \nabla f(X^T \bw) = \sum_{k=1}^K \left(X_k
\nabla f_k (X_k^T \bw)\right).
\label{eq:grad}
\end{equation}
We see that, except for the sum over $k$, the computation can be
conducted locally provided $\bw$ is available to all machines. Our
algorithm maintains $X_k^T \bw$ on the $k$th machine throughout, and
the most costly steps are the matrix-vector multiplications between $X_k$
and $\nabla f_k(X_k^T \bw)$, and $X^T \bw$. The local $d$-dimensional
partial gradients are then aggregated through an allreduce operation.

\subsection{Constructing a good $H$ efficiently}
\label{subsec:products}

% In distributed optimization, when $F$ is twice-differentiable,
% iterative methods like conjugate-gradient (CG) can be used to compute
% approximate Newton directions by communicating just one $O(n)$ vector
% per CG iteration, making the communication cost acceptable.
% However, we cannot use CG when $g$ is present in \eqref{eq:quadratic},
% and using the real Hessian which requires full information of $X$
% becomes communication-inefficient.
% Therefore, we need to consider different choices of $H$ that can be
% used with low cost.
% A possible choice is to use some quasi-Newton approximation of the
% Hessian, and fast local convergence rates can be obtained under
% certain conditions \citep{LeeSS14a}.
% However, a drawback of those quasi-Newton approximation with provable
% superlinear local convergence is that when the number of iteration
% increases, it can be costly to store and form the matrix.
% Therefore, we consider the LBFGS approximation
% \citep{LiuN89a}, which can be maintained and utilized efficiently,
% and its performance is close to the unlimited-memory counterpart in
% practice \citep{WanLL16a}.
%\CONG{I took out all the stuff about CG and other quasi-Newton
%methods. It did not seem necessary. }

We use the Hessian approximation constructed by the LBFGS algorithm
\citep{LiuN89a}, and propose a way to maintain and utilize it
efficiently in a distributed setting. Using the compact representation
in \citet{ByrNS94a}, given a prespecified integer $m > 0$, at the $t$th
iteration for $t > 0$, let $\tilde m \coloneqq \min(m,t)$, and define
\begin{equation*}
\bs_i \coloneqq \bw_{i+1} - \bw_i, \quad
\by_i \coloneqq \nabla \tilde f (\bw_{i+1}) - \nabla \tilde f(\bw_i),
\quad \forall i.
\end{equation*}
The LBFGS Hessian approximation matrix is
\begin{equation}
	H_t = \gamma_t I - U_t M_t^{-1} U_t^T,
\label{eq:Hk}
\end{equation}
where
\begin{subequations}
\label{eq:M}
\begin{align}
U_t &\coloneqq \left[\gamma_t S_t, Y_t\right],\quad
	M_t \coloneqq \left[\begin{array}{cc}
		\gamma_t S_t^T S_t, & L_t\\
		L_t^T & - D_t \end{array}\right],\\
\gamma_t &\coloneqq \frac{\bs_{t-1}^T \bs_{t-1}}{\bs_{t-1}^T y_{t-1}},
\label{eq:gammat}
\end{align}
\end{subequations}
and
\begin{subequations} \label{eq:updates}
\begin{align}
S_t &\coloneqq \left[\bs_{t-\tilde{m}}, \bs_{t-\tilde{m}+1},\dotsc, \bs_{t-1}\right],\\
Y_t &\coloneqq \left[\by_{t-\tilde{m}}, \by_{t-\tilde{m}+1},\dotsc, \by_{t-1}\right],\\
D_t &\coloneqq \diag\left(\bs_{t-\tilde{m}}^T \by_{t-\tilde{m}}, \dotsc,
	\bs_{t-1}^T \by_{t-1}\right),\\
\left(L_t\right)_{i,j} &\coloneqq
	\begin{cases}
	\bs_{t - m - 1 + i}^T \by_{t - m - 1 + j}, &\text{ if } i > j,\\
	0, &\text{ otherwise.}
	\end{cases}
\end{align}
\end{subequations}
At the first iteration where no $\bs_i$ and $\by_i$ are available, we
set $H_0 \coloneqq a_0 I$ for some positive scalar $a_0$.
%In particular, in our implementation,
When $f$ is twice-differentiable and convex, we use
\begin{equation}
a_0 \coloneqq \frac{\left\|\nabla f(\bw_0)\right\|^2_{\nabla^2 f(\bw_0)}}{\left\|\nabla
f(\bw_0)\right\|^2}.
\label{eq:a0}
\end{equation}

If $f$ is not strongly convex, it is possible that \eqref{eq:Hk} is
only positive semi-definite.  In this case, we follow \citet{LiF01a},
taking the $m$ update pairs to be the most recent $m$ iterations for
which the inequality
\begin{equation}
\bs_i^T \by_i \geq \delta \bs_i^T \bs_i
\label{eq:safeguard}
\end{equation}
is satisfied, for some predefined $\delta > 0$. It can be shown that
this safeguard ensures that $H_t$ are always positive definite and the
eigenvalues are bounded within a positive range (see, for example, the
appendix of \citet{LeeW17a}).

No additional communication is required to compute $H_t$. The
gradients at all previous iterations have been shared with all machines
through the {\em allreduce} operation, and the iterates $\bw_t$ are
also available on each machine, as they are needed to compute the
local gradient. Thus the information needed to form $H_t$ is
available locally on each machine.
%Note that the the solution of \eqref{eq:quadratic} is the vector
%$\bs_i$ (up to a step size scaling), thus if we solve the entire
%subproblem locally, we can obtain this vector without any additional
%communication. \CONG{This sentence is still confusing, but I did not figure out how to change it.}

We now consider the costs associated with the matrix $M_t$. In
practice, $m$ is usually much smaller than $d$, so the
$O(m^3)$ cost of inverting the matrix directly is insignificant
compared to the cost of the other steps. However, if $d$ is large, the
computation of the inner products $\bs_i^T \by_j$ and $\bs_i^T \bs_j$
can be expensive. We can significantly reduce this cost by computing
and maintaining the inner products in parallel and assembling the
results with $O(m)$ communication cost. At the $t$th iteration,
given the new $\bs_{t-1}$, we compute its inner products with both
$S_t$ and $Y_t$ in parallel via the summations
\begin{equation*}
	\sum_{k=1}^K \left( (S_t)_{J_k,:}^T (\bs_{t-1})_{J_k} \right),\quad
	\sum_{k=1}^K \left( (Y_t)_{J_k,:}^T (\bs_{t-1})_{J_k} \right),
\end{equation*}
requiring $O(m)$ communication of the partial sums on each machine. We keep these results until $\bs_{t-1}$ and
$\by_{t-1}$ are discarded, so that at each iteration, only $2m$ (not
$O(m^2)$) inner products are computed.
%(Specifically,
%parallelization of the inner products is achieved by performing
%partial inner products corresponding to subvector partitions of
%$\bs_{t}$ and $\by_{t}$, followed by an allreduce operation with
%O(m) communication to aggregate the results.)
%\STEVE{Did some significant rewriting here, please check for
%correctness.}

% In practice, $m$ is usually much smaller than $d$, and thus directly
% inverting $M_t$ at a cost of $O(m^3)$ is insignificant compared to
% to the cost of computing the gradient and solving \eqref{eq:quadratic}, 
% provided the entries of $M_t$ are available.
% However, to obtain the entries of $M_t$, when $d$ is large, the
% computation of the inner products of the form $\bs_i^T \by_j$ can be
% expensive.
% To reduce this cost, we can compute and maintain the inner products
% needed for constructing $M_t$ in parallel.
% In particular, at the $t$-th iteration, given the new $\bs_{t-1}$, we
% compute its inner products with both $S_t$ and $Y_t$ in parallel and
% keep the results until $\bs_{t-1}$ and $\by_{t-1}$ are discarded,
% so at each iteration, only $2m$ but not $O(m^2)$ inner products are
% conducted.
% The parallel computation is executed in the way such that each
% machine computes the inner products of the subvectors confined to a
% subset of the coordinates,
% and then an negligible $O(2m)$ communication is conducted to sum up
% these partial inner products.
% This parallelization is useful to trade a very cheap communication for
% significantly reduced computational cost when $d$ is much larger
% than $m$.

\subsection{Solving the Subproblem}
\label{subsec:sparsa}
The approximate Hessian $H_t$ is generally not diagonal, so there is
no easy closed-form solution to \eqref{eq:quadratic}.  We will instead
use iterative algorithms to obtain an approximate solution to this
subproblem.
%We call the procedure for updating $\bw$ the outer iterations, and the
%loop to solve \eqref{eq:quadratic} the inner iterations. 
% In special cases such as when $g(\cdot) = \|\cdot\|_1$, CD is one of
% the most efficient approaches for solving \eqref{eq:quadratic} 
In single-core environments, coordinate descent (CD) is one of the
most efficient approaches for solving \eqref{eq:quadratic}
\citep{YuaHL12a, KaiYDR14a, SchT16a}. Since the subproblem
\eqref{eq:quadratic} is formed locally on all machines, a local CD
process can be applied when $g$ is separable and $d$ is not too large.
%but its parallelization over multiple machines
% \CONG{(Do all the citations focus on the $\ell_1$ case?)}
% \PEI{\cite{SchT16a} is general in the theoretical part, but the
% experiment is $\ell_1$ regularized logistic regression.}
%will require frequent
%communication, which is undesirable because of the first term in
%\eqref{eq:commcost},
The alternative approach of applying proximal-gradient
methods  to \eqref{eq:quadratic} may be more efficient in
distributed settings, since they can be parallelized with
little communication cost for large $d$,
and can be applied to larger classes of regularizers $g$.
% For example, CD may be
% inapplicable when $g$ is not block-separable.
%\CONG{Check that this rephrasing is okay}

% However, when $g$ is not block separable, CD may not work.
% In this case, proximal-gradient-type methods are popular for its
% simplicity. 
% Another advantage of this approach over CD is that most computations
% can be parallelized with little communication cost,
% while CD cannot utilize the additional computational power
% of multiple machines easily because the communication will be too
% frequent.

The fastest proximal-gradient-type methods are accelerated gradient
(AG) \citep{Nes13a} and SpaRSA \citep{WriNF09a}. SpaRSA is a basic
proximal-gradient method with spectral initialization of the parameter
in the prox term. SpaRSA has a few key advantages over AG despite
its weaker theoretical convergence rate guarantees. It tends to be
faster in the early iterations of the algorithm \citep{YanZ11a}, thus
possibly yielding a solution of acceptable accuracy in fewer
iterations than AG. It is also a descent method, reducing the
objective $Q_H$ at every iteration, which ensures that the solution
returned is at least as good as the original guess $\bp = 0$

In the rest of this subsection, we will describe a distributed
implementation of SpaRSA for \eqref{eq:quadratic}, with $H$ as
defined in \eqref{eq:Hk}. To distinguish between the iterations of our
main algorithm (i.e. the entire process required to update $\bw$ a
single time) and the iterations of SpaRSA, we will refer to them by
\emph{main iterations} and \emph{SpaRSA iterations} respectively.

% Although AG has better theoretical global convergence rate
% guarantees, by utilizing the curvature information of the smooth
% part, SpaRSA tends to be faster in practice \citep{YanZ11a}
% especially in the early iterations of the algorithm, which is
% sufficient for our purposes.  which is the only part we need.
% Therefore, by default, this approach is used in our algorithm to
% solve \eqref{eq:quadratic} with the specific choice of $H_t$ defined
% in \eqref{eq:Hk}.  Due to the low-rank structure of the second term
% of \eqref{eq:Hk}, all except at most $2m$ of the eigenvalues of
% $H_t$ are exactly $\gamma_t$.  Methods utilizing the curvature
% information are usually expected to work well for this kind of
% matrices.  Moreover, an advantage of SpaRSA over AG is that it is a
% descent method, so we can ensure that the approximate solution
% returned %for \eqref{eq:quadratic} is at least as good as than the
% original guess of $\bp = 0$. Here we give details of applying SpaRSA
% to solve \eqref{eq:quadratic} with $H$ defined in \eqref{eq:Hk}, and
% discuss its distributed implementation.

Since $H$ and $\bw$ are fixed in this subsection, we will write
$Q_H(\cdot;\bw)$ simply as $Q(\cdot)$. We denote the $i$th iterate of
the SpaRSA algorithm as $\bp^{(i)}$, and we initialize $\bp^{(0)}
\equiv 0$. We denote the smooth part of $Q_H$ by $\hat f(\bp)$, and
the nonsmooth $g(\bw+\bp)$ by $\hat g(\bp)$. At the $i$th iteration of
SpaRSA, we define
\begin{equation}
\bu^{(i)}_{\psi_i} \coloneqq \bp^{(i)} -  \frac{\nabla \hat
	f(\bp^{(i)})}{ \psi_i},
	\label{eq:udef}
\end{equation}
and solve the following subproblem:
\begin{equation}
\bp^{(i+1)} = \arg \min_{\bp} \frac{1}{2} \left\|\bp -
\bu^{(i)}_{\psi_i}
	\right\|^2 + \frac{\hat g(\bp)}{\psi_i},
\label{eq:dk}
\end{equation}
where $\psi_i$ is defined by the following ``spectral'' formula:
\begin{equation}
	\psi_i = \frac{\left(\bp^{(i)} - \bp^{(i-1)}\right)^T \left(\nabla
	\hat f(\bp^{(i)}) - \nabla \hat
	f(\bp^{(i-1)})\right)}{\left\|\bp^{(i)} - \bp^{(i-1)}\right\|^2}.
	\label{eq:psi}
\end{equation}
When $i=0$, we use a pre-assigned value for $\psi_0$ instead.  (In our
LBFGS choice for $H_t$, we use the value of $\gamma_t$ from
\eqref{eq:gammat} as the initial estimate of $\psi_0$.)  The exact minimizer of
\eqref{eq:dk} can be difficult to compute for general regularizers $g$.
However, approximate solutions of \eqref{eq:dk} suffice to provide a convergence rate guarantee for solving \eqref{eq:quadratic} \citep{SchRB11a, SchT16a, GhaS16a, LeeW18a}. %Then we project this value to the interval $[\psi_{\min},
%\psi_{\max}]$, for some pre-specified $\psi_{\max} >
%\psi_{\min} > 0$.
Since it is known (see \citep{LiF01a}) that the eigenvalues of $H$ are upper- and
lower-bounded in a positive range after the safeguard
\eqref{eq:safeguard} is applied, we can guarantee that this initialization of $\psi_i$
is bounded within a positive range; see Section~\ref{sec:analysis}.
The initial value of $\psi_i$ defined in \eqref{eq:psi} is increased
successively by a chosen constant factor $\beta>1$, and $\bp^{(i+1)}$
is recalculated from \eqref{eq:dk}, until the following sufficient
decrease criterion is satisfied:
\begin{equation}
	Q\left(\bp^{(i+1)}\right) \leq Q\left(\bp^{(i)}\right) -
	\frac{\psi_i \sigma_0}{2} \left\|\bp^{(i+1)} -
	\bp^{(i)}\right\|^2,
	\label{eq:accept}
\end{equation}
for some specified $\sigma_0 \in (0,1)$.
% we discard this solution, multiply $\psi_i$ by a factor $\beta > 1$,
% and solve \eqref{eq:dk} again with this updated $\psi_i$.
% This procedure is repeated until \eqref{eq:accept} is satisfied.
Note that the evaluation of $Q(\bp)$ needed in \eqref{eq:accept} can
be done efficiently through a parallel computation of $(\nabla \hat
f(\bp) + \nabla \tilde f(\bw))^T \bp / 2$ plus the $\hat g(\bp)$ term.
% and thus the cost is negligible.
From the boundedness of $H$, one can
easily prove that \eqref{eq:accept} is satisfied after a finite number of increases of  $\psi_i$, as we will show in
Section~\ref{sec:analysis}.  In our algorithm, SpaRSA runs until
either a fixed number of iterations is reached, or when some certain
inner stopping condition for optimizing \eqref{eq:quadratic} is satisfied.

For general $H$, the computational bottleneck of $\nabla \hat f$ would
take $O(d^2)$ operations to compute the $H\bp^{(i)}$ term. However,
for our LBFGS choice of $H_k$, this cost is reduced to $O(md + m^2)$
by utilizing the matrix structure, as shown in the following formula:
\begin{align}
\nabla \hat f\left(\bp\right) = \nabla \tilde f\left(\bw\right)
+ H \bp
= \nabla \tilde f(\bw) + \gamma \bp - U_t \left(M_t^{-1} \left(U_t^T
\bp\right)\right).
\label{eq:u}
\end{align}
The computation of \eqref{eq:u} can be parallelized, by first
parallelizing computation of the inner product $U_t^T \bp^{(i)}$ via
the formula
\begin{equation*}
	\sum_{k=1}^K \left(U_t\right)_{J_k,:}^T \bp^{(i)}_{J_k}
\end{equation*}
with $O(m)$ communication.
(We implement the parallel inner products as described in
Section~\ref{subsec:products}.)  We either
construct the whole vector $\bu$ in \eqref{eq:udef} on all machines,
or let each machine compute a subvector of $\bu$ in \eqref{eq:udef}.
The former scheme is most suitable when $g$ is non-separable, but the
latter has a lower computational burden per machine, in cases for
which it is feasible to apply. We describe the latter scheme in more
detail.  The $k$th machine locally computes $\bp^{(i)}_{J_k}$ without
communicating the whole vector. Then at each iteration of SpaRSA,
partial inner products between $(U_t)_{J_k,:}$ and $\bp^{(i)}_{J_k}$
can be computed locally, and the results are assembled with one $O(m)$
communication. This technique also suggests a spatial advantage of our
method: The rows of $S_t$ and $Y_t$ can be stored in a distributed
manner consistent with the subvector partition.
% , as the corresponding rows of them are the
% only information needed to form he local part of $\nabla \hat f$.
This approach incurs $O(m)$ communication cost per SpaRSA iteration,
with the computational cost reduced from $O(md)$ to $O(md/K)$ per
machine.
Since both the $O(m)$ communication cost and the $O(md/K)$
computational cost are inexpensive when $m$ is small, in comparison to
the computation of $\nabla \tilde f$, one can afford to conduct
multiple iterations of SpaRSA at every main iteration.
%\STEVE{Did not understand this sentence.}
Note that the latency incurred at every communication as
discussed in \eqref{eq:commcost} can be capped by setting a
maximum iteration limit for SpaRSA.
Finally, after the SpaRSA procedure terminates, all machines conduct
one $O(d)$ communication to gather the  update step $\bp$.
%\STEVE{Did a lot of rephrasing in this paragraph to try to make it
%cohere; please check for correctness.}

The distributed implementation of SpaRSA for solving \eqref{eq:quadratic}
%with $H$ being defined by \eqref{eq:Hk} assuming
%separable $g$ 
is summarized in Algorithm~\ref{alg:sparsa}.

\begin{algorithm}[tb]
	\DontPrintSemicolon
\caption{Distributed SpaRSA for solving \eqref{eq:quadratic} with
LBFGS quadratic approximation on machine $k$}
\label{alg:sparsa}
\begin{algorithmic}[1]
\STATE Given $\beta > 1$, $\sigma_0 \in (0,1)$,
$M_t^{-1}$, $U_t$, and $\gamma_t$;
\STATE Set $\bp^{(0)}_{J_k} \leftarrow 0$;
	\FOR{$i=0,1,2,\dotsc$}
		\IF{$i=0$}
			\STATE $\psi = \gamma_t$;
		\ELSE
			\STATE Compute $\psi$ in \eqref{eq:psi} through
			\begin{equation*}
				\sum_{j=1}^K \left(\bp^{(i)}_{J_j} -
				\bp^{(i-1)}_{J_j}\right)^T
				\left(\nabla_{J_j} \hat f \left(\bp^{(i)} \right) - \nabla_{J_j}
				\hat f \left(\bp^{(i-1)} \right)\right),
			\end{equation*}
			and
			\begin{equation*}
				\sum_{j=1}^K
				\left\|\bp^{(i)}_{J_j} - \bp^{(i-1)}_{J_j}\right\|^2;
			\end{equation*}
			\Comment*[r]{
			$O(1)$
		comm.}
		\ENDIF
		\STATE Obtain \Comment*[r]{$O(m)$ comm.}
		\begin{equation*}
			U_t^T \bp^{(i)} = \sum_{j=1}^K \left(U_t\right)_{J_j,:}^T
			\bp^{(i)}_{J_j};
		\end{equation*}
		\STATE Compute \[\nabla_{J_k} \hat
		f\left(\bp^{(i)}\right) = \nabla_{J_k} \tilde f \left(\bw\right) + \gamma
		\bp^{(i)}_{J_k} -
	\left(U_t\right)_{J_k,:}\left(M_t^{-1} \left(U_t^T
\bp^{(i)}\right)\right)\]
by \eqref{eq:u};
		\WHILE{TRUE}
			\STATE Solve \eqref{eq:dk} on coordinates indexed by $J_k$ to
			obtain $\bp_{J_k}$;
			\IF{\eqref{eq:accept} holds \Comment*[r]{$O(1)$ comm.}}
			\STATE $\bp^{(i+1)}_{J_k} \leftarrow \bp_{J_k}$; $\psi_i \leftarrow \psi$;
			\STATE Break;
			\ENDIF
			\STATE $\psi \leftarrow \beta \psi$;
			\STATE Re-solve \eqref{eq:dk} with the new $\psi$ to
			obtain a new $\bp_{J_k}$;
		\ENDWHILE
		\STATE Break if some stopping condition is met;
	\ENDFOR
	\STATE Gather the final solution $\bp$ \Comment*[r]{$O(d)$ comm.}
\end{algorithmic}
\end{algorithm}

\subsection{Line Search}
% Although when $H$ is positive semi-definite, the obtained solution
% through approximately solving \eqref{eq:quadratic} is guaranteed to be
% a descent direction,
% directly applying it may not give a lower value of $F$.
% Instead, a line search procedure is usually needed to pick a step
% size ensuring sufficient decrease of the objective.
%\CONG{(Check that I didn't mess up the meaning in my rewording.)}
After obtaining an update direction $\bp^k$ by approximately
minimizing $Q_{H_k}(\cdot;\bw_k)$, a line search procedure is usually
needed to find a step size $\alpha_k$ that ensures sufficient decrease
in the objective value.
%especially if $H$ is just positive semi-definite.
We follow \citet{TseY09a} by using a modified-Armijo-type backtracking
line search to find a suitable step size $\alpha$.  Given the current
iterate $\bw$, the update direction $\bp$, and parameters $\sigma_1,
\theta \in (0,1)$, we set
\begin{equation}
\Delta \coloneqq \nabla \tilde f\left( \bw \right)^T \bp + g\left(\bw +
\bp \right) - g\left( \bw \right)
\label{eq:Delta}
\end{equation}
and pick the step size as the largest of $\theta^0, \theta^1,\dotsc$
satisfying
\begin{equation}
F\left(\bw + \alpha \bp \right) \leq F\left( \bw \right) + \alpha
\sigma_1
\Delta.
\label{eq:armijo}
\end{equation}
The computation of $\Delta$ can again be done in a distributed manner.
First,  $X_k^T \bp$ can be computed locally on each machine, then the
first term in \eqref{eq:Delta} is obtained by sending a scalar over
the network.  When $g$ is block-separable, its computation can also be
distributed across machines.
%\CONG{you may want to explain what permits means here.}
The vector $X_k^T \bp$ is then used to compute the left-hand side of
\eqref{eq:armijo} for arbitrary values of $\alpha$.  Writing $X_k^T
(\bw + \alpha \bp) = \left( X_k^T \bw \right) + \alpha \left(X_k^T
\bp\right)$, we see that once $ X_k^T \bw$ and $X_k^T \bp$ are known,
we can evaluate $X_k^T (\bw + \alpha \bp)$ via an ``axpy'' operation
(weighted sum of two vectors).  Because $H_t$ defined in
\eqref{eq:Hk} attempts to approximate the real Hessian, the unit step
$\alpha=1$ frequently satisfies \eqref{eq:armijo}, so we use the value
$1$ as the initial guess.
%most of the times as we will see empirically in Section~\ref{sec:exp}.
Aside from the communication needed to compute the summation of the
$f_k$ terms in the evaluation of $F$, the only other communication
needed is to share the update direction $\bp$ if \eqref{eq:quadratic}
was solved in a distributed manner. Thus, two rounds of $O(d)$
communication are incurred per main iteration. Otherwise, if each
machine solves the same subproblem \eqref{eq:quadratic} locally, then
only one round of $O(d)$ communication is required.

Our distributed algorithm for \eqref{eq:f} is summarized in
Algorithm~\ref{alg:proximalbfgs}.

\begin{algorithm}[tb]
	\DontPrintSemicolon
\caption{A distributed proximal variable-metric LBFGS method with line
	search for \eqref{eq:f}}
\label{alg:proximalbfgs}
\begin{algorithmic}[1]
\STATE Given $\theta, \sigma_1 \in (0,1)$, $\delta > 0$, an initial
point $\bw=\bw_0$, distributed $X = [X_1,\dotsc,X_K]$;
\FOR{Machines $k=1,\dotsc,K$ in parallel}
	\STATE Compute $X_k^T \bw$ and $f_k(X_k^T \bw)$;
	\STATE $H \leftarrow a I$ for some $a > 0$ (use \eqref{eq:a0} if
possible);
\STATE Obtain $F(\bw)$; \Comment*[r]{$O(1)$ comm.}
	\FOR{$t=0,1,2,\dotsc$}
		\STATE Compute $\nabla \tilde f(\bw)$ through
		\eqref{eq:grad};
		\Comment*[r]{$O(d)$ comm.}
		\IF{$t \neq 0$ and \eqref{eq:safeguard} holds for $(\bs_{t-1},
		\by_{t-1})$}
			\STATE Update $U$, $M$, and $\gamma$ by
			\eqref{eq:M}-\eqref{eq:updates};
			\Comment*[r]{$O(m)$ comm.}
			\STATE Construct a new $H$ from \eqref{eq:Hk};
		\ENDIF
		\IF{$H = aI$}
			\STATE Solve \eqref{eq:quadratic} directly to obtain
			$\bp$;
		\ELSE
			\STATE Solve \eqref{eq:quadratic} using Algorithm~\ref{alg:sparsa} either in a distributed manner or locally
			to obtain $\bp$;
		\ENDIF
		\STATE Compute $X_k^T \bp$;
		\STATE Compute $\Delta$ defined in
		\eqref{eq:Delta}; \Comment*[r]{
		$O(1)$ comm.}
		\FOR{$i=0,1,\dotsc$}
			\STATE $\alpha = \theta^i$;
			\STATE Compute $(X_k^T \bw) + \alpha (X_k^T \bp)$;
			\STATE Compute $F(\bw + \alpha \bp)$; \Comment*[r]{$O(1)$
			comm.}
			\IF{$F(\bw + \alpha \bp) \leq F(\bw) + \sigma_1 \alpha \Delta$}
				\STATE $\bw \leftarrow \bw + \alpha \bp$, $F(\bw)
				\leftarrow F(\bw + \alpha \bp)$;
				\STATE $X_k^T \bw \leftarrow X_k^T \bw + \alpha X_k^T
				\bp$;
                                \STATE $\bw_{t+1} \leftarrow \bw$;
                                \STATE
                                $\bs_t \leftarrow \bw_{t+1}-\bw_t$,
                                $\by_t \leftarrow \nabla \tilde{f}(\bw_{t+1}) -
                                \nabla \tilde{f} (\bw_t)$;
				\STATE Break;
			\ENDIF
		\ENDFOR
	\ENDFOR
\ENDFOR
\end{algorithmic}
\end{algorithm}

\subsection{Cost Analysis}
We now summarize the costs of our algorithm.
For the distributed version of Algorithm
\ref{alg:sparsa},
each iteration costs
\begin{equation}
	O\left(\frac{d}{K} + \frac{md}{K} +  m^2\right) =
	O\left(\frac{md}{K} + m^2\right)
	\label{eq:costsparsa}
\end{equation}
in computation  and
\begin{equation*}
	O\left(m + 1 \times \text{number of times \eqref{eq:accept} is
evaluated}\right)
\end{equation*}
in communication.
In the next section, we will show that \eqref{eq:accept} is accepted
in a constant number of times and thus the overall communication cost
is $O(m)$.

For  Algorithm \ref{alg:proximalbfgs}, the computational cost per
iteration is
\begin{equation}
	O\left(\frac{\#\text{nnz}}{K} + \frac{n}{K} + d + \frac{md}{K} +
	\frac{d}{K}\right) = O\left( \frac{\#\text{nnz}}{K} + d +
	\frac{md}{K} \right),
	\label{eq:costmain}
\end{equation}
where \#nnz is the number of nonzero elements in $X$,
and the communication cost is
\begin{equation*}
	O\left( 1 + m + d \right) = O\left( d \right).
\end{equation*}
We note that the costs of Algorithm \ref{alg:sparsa} are
dominated by those of Algorithm \ref{alg:proximalbfgs} if a fixed number of SpaRSA
iterations is conducted every main iteration.
\section{Communication Complexity}
\label{sec:analysis}
The use of an iterative solver for the subproblem \eqref{eq:quadratic}
generally results in an inexact solution.  We first show that running
SpaRSA for any fixed number of iterations guarantees a step $\bp$
whose accuracy is sufficient to prove overall convergence.
%% multiplicative improvement of the objective value in
%% \eqref{eq:quadratic}.

\begin{lemma}
	\label{lemma:sparsa}
Using $H_t$ as defined in \eqref{eq:Hk} with the safeguard
mechanism \eqref{eq:safeguard} in \eqref{eq:quadratic}, we have the following.
\begin{compactenum}%[
%topsep=0pt,
%partopsep=0pt,
%leftmargin=*,
%itemsep=0pt,
%parsep=0pt
%]
\item
There exist constants $c_1 \geq c_2 > 0$ such that
$c_1 I \succeq H_t \succeq c_2 I$  for all main iterations.
Moreover, $\|X^T X\| L \geq \gamma_t \geq \delta$ for all
$t > 0$.
\item
The initial estimate of $\psi_i$ at every SpaRSA iteration
is bounded within the range of $[\min\{c_2, \delta\}, \max\{c_1, \|X^T
X\|L\}]$, and the final accepted value $\psi_i$ is upper-bounded.
\item
SpaRSA is globally Q-linear convergent in solving
\eqref{eq:quadratic}.
Therefore, there exists $\eta \in [0,1)$ such that if we run
	at least $S$ iterations of SpaRSA for all main iterations for any
	$S>0$, the approximate solution $\bp$ satisfies
\begin{equation}
-\eta^S Q^* = \eta^S\left(Q\left(0\right) - Q^*\right) \geq
Q\left(\bp\right) - Q^*,
\label{eq:approx}
\end{equation}
where $Q^*$ is the optimal objective of
\eqref{eq:quadratic}.
\end{compactenum}
\end{lemma}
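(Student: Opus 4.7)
\medskip
\noindent\textbf{Proof plan.} The three parts are essentially layered: Part~1 supplies spectral bounds on $H_t$, Part~2 reuses those bounds to control the spectral parameter $\psi_i$ inside SpaRSA, and Part~3 then applies existing Q-linear convergence results for proximal gradient on a strongly convex quadratic-plus-regularizer objective. I would treat them in this order.

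\medskip
\noindent\textbf{Part 1.} The uniform two-sided eigenvalue bounds $c_1 I \succeq H_t \succeq c_2 I$ are the standard output of the safeguarded compact-form LBFGS analysis of \citet{LiF01a} (reproduced in the appendix of \citet{LeeW17a}), so I would simply invoke that result, verifying that the hypothesis \eqref{eq:safeguard} is exactly the one needed. For the separate bounds on $\gamma_t$, I would combine: (i) the safeguard $\bs_{t-1}^T\by_{t-1}\ge \delta\,\bs_{t-1}^T\bs_{t-1}$; and (ii) the fact that $\nabla\tilde f(\bw)=X\nabla f(X^T\bw)$ has Lipschitz constant at most $\|X^TX\|L$, so that $\bs_{t-1}^T\by_{t-1}\le \|X^TX\|L\,\|\bs_{t-1}\|^2$. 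Substituting into the definition of $\gamma_t$ in \eqref{eq:gammat} then pins $\gamma_t$ into $[\delta,\|X^TX\|L]$ (reading the ratio consistently with how $\gamma_t$ enters \eqref{eq:Hk}).

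\medskip
\noindent\textbf{Part 2.} The key observation is that the subproblem smooth part is quadratic: $\hat f(\bp)=\nabla\tilde f(\bw)^T\bp+\tfrac12\|\bp\|_H^2$, whence
\[
\nabla \hat f(\bp^{(i)})-\nabla \hat f(\bp^{(i-1)}) \;=\; H\bigl(\bp^{(i)}-\bp^{(i-1)}\bigr).
\]
Plugging into \eqref{eq:psi} reveals $\psi_i$ as a Rayleigh quotient of $H$ along $\bp^{(i)}-\bp^{(i-1)}$, so by Part~1 the \eqref{eq:psi}-initialization lies in $[c_2,c_1]$; together with $\psi_0=\gamma_t\in[\delta,\|X^TX\|L]$, this proves the stated initial range. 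For the upper bound on the \emph{accepted} $\psi_i$ after backtracking, I would use the descent lemma for the $c_1$-smooth $\hat f$: for any candidate $\bp^{(i+1)}$ produced by \eqref{eq:dk},
\[
\hat f(\bp^{(i+1)})\le \hat f(\bp^{(i)})+\nabla \hat f(\bp^{(i)})^T(\bp^{(i+1)}-\bp^{(i)})+\tfrac{c_1}{2}\|\bp^{(i+1)}-\bp^{(i)}\|^2,
\]
while optimality for \eqref{eq:dk} gives $\nabla\hat f(\bp^{(i)})^T(\bp^{(i+1)}-\bp^{(i)})+\hat g(\bp^{(i+1)})-\hat g(\bp^{(i)})\le -\psi_i\|\bp^{(i+1)}-\bp^{(i)}\|^2$. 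Combining, whenever $\psi_i\ge c_1/(1-\sigma_0)$ the sufficient decrease \eqref{eq:accept} holds automatically, so backtracking terminates with $\psi_i\le \beta c_1/(1-\sigma_0)$.

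\medskip
\noindent\textbf{Part 3.} Because Part~1 gives $H\succeq c_2 I$, the smooth part $\hat f$ is $c_2$-strongly convex, and Part~2 keeps each $\psi_i$ bounded above and below by fixed positive constants. These are exactly the hypotheses under which SpaRSA (spectral proximal gradient with the sufficient-decrease safeguard \eqref{eq:accept}) is known to converge Q-linearly in objective value; I would cite \citet{SchT16a,GhaS16a,LeeW18a} to extract the existence of $\eta\in[0,1)$ with $Q(\bp^{(i+1)})-Q^*\le \eta(Q(\bp^{(i)})-Q^*)$. Iterating $S$ times and using $\bp^{(0)}=0$, hence $Q(\bp^{(0)})=0$, yields \eqref{eq:approx}. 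The main obstacle is Part~2's bound on the accepted $\psi_i$: it is the only place where one must interact with both the prox step and the sufficient-decrease test simultaneously; everything else is either a direct invocation of \citet{LiF01a} or a Rayleigh-quotient calculation enabled by the quadratic form of $\hat f$.
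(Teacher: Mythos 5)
Your Parts 1 and 2 mirror the paper's own argument almost exactly: boundedness of $H_t$ is quoted from \citet{LiF01a} (detailed in the appendix of \citet{LeeW17a}), the two bounds on $\gamma_t$ come from \eqref{eq:safeguard} and the $\|X^TX\|L$-Lipschitz continuity of $\nabla\tilde f$, and the identity $\nabla\hat f(\bp_1)-\nabla\hat f(\bp_2)=H(\bp_1-\bp_2)$ turns \eqref{eq:psi} into a Rayleigh quotient of $H$, giving the range $[\min\{c_2,\delta\},\max\{c_1,\|X^TX\|L\}]$. The inequality you invoke from ``optimality for \eqref{eq:dk}'', namely $\nabla\hat f(\bp^{(i)})^T\bd+\hat g(\bp^{(i+1)})-\hat g(\bp^{(i)})\le-\psi_i\|\bd\|^2$ for the prox step $\bd$, is exactly the model-decrease bound the paper takes from Lemma 12 of \citet{LeeW18a} (it follows from $\psi_i$-strong convexity of the model, which vanishes at $\bd=0$), and combining it with the $c_1$-descent lemma reproduces the paper's acceptance threshold up to your looser constant $c_1/(1-\sigma_0)$ versus $c_1/(2-\sigma_0)$. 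One bookkeeping slip: the accepted value need not satisfy $\psi_i\le\beta c_1/(1-\sigma_0)$, since the initialization itself (e.g.\ $\psi_0=\gamma_t\le\|X^TX\|L$) may exceed that threshold and be accepted as is; the correct conclusion is $\psi_i\le\max\{\beta c_1/(1-\sigma_0),\,c_1,\,\|X^TX\|L\}$, which still delivers the boundedness the lemma claims.

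The genuine divergence is Part 3, and there your plan has a gap. The paper does not cite a known rate for SpaRSA; it proves the contraction directly: $c_2$-strong convexity of $Q$ gives the error bound \eqref{eq:kl} in terms of the minimal subgradient norm at $\bp^{(i+1)}$, the optimality condition \eqref{eq:dopt} of \eqref{eq:dk} bounds that subgradient norm by $(c_1+\psi_i)\|\bd^*\|$, hence $Q(\bp^{(i+1)})-Q^*\le c_2^{-1}(c_1^2+\psi_i^2)\|\bd^*\|^2$, and combining with \eqref{eq:accept} yields the explicit factor $\eta=\bigl(1+\tfrac{c_2\sigma_0\psi_i}{2(c_1^2+\psi_i^2)}\bigr)^{-1}$, uniform in $i$ because $\psi_i$ is bounded by Part 2. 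Citing \citet{SchT16a,GhaS16a} does not supply this: those works analyze the outer inexact proximal (quasi-)Newton loop with randomized coordinate descent as the inner solver, not SpaRSA with the acceptance test \eqref{eq:accept}; and the standard linear-rate proofs for proximal gradient assume the prox parameter dominates the smoothness constant (or use the usual descent-lemma backtracking test), whereas here the accepted $\psi_i$ can be as small as $\min\{c_2,\delta\}\ll c_1$, so they do not apply off the shelf. You have identified the right hypotheses (bounded $\psi_i$, strong convexity, sufficient decrease), but the contraction step itself needs the short error-bound argument above rather than an appeal to the literature. The final step --- $Q(0)=0$ and iterating $S$ times to obtain \eqref{eq:approx} --- is fine.
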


Lemma \ref{lemma:sparsa} establishes how the number of iterations of
SpaRSA affects the inexactness of the subproblem solution.
Given this measure,
we can leverage the results developed in \citet{LeeW18a} to obtain
iteration complexity guarantees for our algorithm. Since in our
algorithm, communication complexity scales linearly with iteration
complexity, this guarantee provides a bound on the amount of
communication.
In particular, our method communicates
% $O(d + m (\#\text{sparsa\_iter}))$
$O(d+mS)$ bytes per iteration (where $S$ is the number of SpaRSA
iterations used, as in Lemma~\ref{lemma:sparsa}) and the second term can usually
be ignored for small $m$.

We show next that the step size generated by our line search procedure
in Algorithm~\ref{alg:proximalbfgs} is lower bounded by a positive
value.
\begin{lemma}
\label{lemma:delta}
If SpaRSA is run at least $S$ iterations in solving
\eqref{eq:quadratic},
the corresponding $\Delta$ defined in \eqref{eq:Delta} satisfies
\begin{equation}
	\Delta \leq -\frac{c_2 \left\|d\right\|^2}{
		\left(1 + \eta^{{S}/{2}}\right)}.
\label{eq:deltabound}
\end{equation}
Moreover, the backtracking subroutine in Algorithm~\ref{alg:proximalbfgs} terminates in finite steps and produces a step
size
\begin{equation}
\alpha \geq \bar\alpha \geq \min\left\{1, \frac{2\theta\left(1 -
\sigma_1\right) c_2}{\left\|X^T X\right\|L\left(1 + \eta^{{S}/{2}}\right)}\right\}.
\label{eq:linesearchbound2}
\end{equation}
\end{lemma}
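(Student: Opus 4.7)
My plan is to prove the two assertions in sequence, leveraging the strong convexity of $Q_H$ implied by Lemma~\ref{lemma:sparsa}(1) together with the Q-linear SpaRSA bound from Lemma~\ref{lemma:sparsa}(3). Throughout I read $\|d\|$ in \eqref{eq:deltabound} as $\|\bp\|$, which I expect to be the intended notation for the subproblem solution.

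For the bound on $\Delta$, the starting observation is that $\Delta = Q(\bp) - \tfrac12 \|\bp\|_H^2$, directly from \eqref{eq:Delta} and the definition of $Q_H$, and that $Q(0) = 0$ so $Q^* \le 0$. Since $H_t \succeq c_2 I$, the function $Q$ is $c_2$-strongly convex, giving both $-Q^* \geq \tfrac{c_2}{2}\|\bp^*\|^2$ (applied at the minimizer $\bp^*$ using $\bp = 0$) and $Q(\bp) - Q^* \geq \tfrac{c_2}{2}\|\bp-\bp^*\|^2$. Plugging the SpaRSA inequality \eqref{eq:approx} into the latter yields $\|\bp - \bp^*\| \leq \sqrt{2\eta^S(-Q^*)/c_2}$, and combining with $\|\bp^*\| \leq \sqrt{2(-Q^*)/c_2}$ via the triangle inequality gives $\|\bp\|^2 \leq \tfrac{2(-Q^*)}{c_2}(1+\eta^{S/2})^2$, i.e., $-Q^* \geq \tfrac{c_2 \|\bp\|^2}{2(1+\eta^{S/2})^2}$. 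Then from $Q(\bp) \leq (1-\eta^S)Q^*$, we obtain $-Q(\bp) \geq \tfrac{c_2(1-\eta^{S/2})}{2(1+\eta^{S/2})}\|\bp\|^2$ after factoring $1-\eta^S = (1-\eta^{S/2})(1+\eta^{S/2})$. Finally, using $\|\bp\|_H^2 \geq c_2\|\bp\|^2$ in the identity $\Delta = Q(\bp) - \tfrac12\|\bp\|_H^2$ and summing the two contributions gives $\Delta \leq -\tfrac{c_2}{2}\|\bp\|^2\bigl[1 + \tfrac{1-\eta^{S/2}}{1+\eta^{S/2}}\bigr] = -\tfrac{c_2 \|\bp\|^2}{1+\eta^{S/2}}$, as required.

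For the line-search bound, I would first establish a one-iteration descent inequality valid for all $\alpha \in [0,1]$. Convexity of $g$ gives $g(\bw + \alpha\bp) \leq (1-\alpha)g(\bw) + \alpha g(\bw+\bp)$, and the descent lemma applied to $\tilde f$ with Lipschitz gradient constant $L\|X^TX\|$ (which follows from $\nabla \tilde f(\bw) = X\nabla f(X^T\bw)$ and the Lipschitz property of $\nabla f$) gives $\tilde f(\bw+\alpha\bp) \leq \tilde f(\bw) + \alpha\nabla\tilde f(\bw)^T\bp + \tfrac{\alpha^2 L\|X^TX\|}{2}\|\bp\|^2$. Adding the two and using \eqref{eq:Delta} yields $F(\bw+\alpha\bp) \leq F(\bw) + \alpha \Delta + \tfrac{\alpha^2 L\|X^TX\|}{2}\|\bp\|^2$. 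The sufficient-decrease condition \eqref{eq:armijo} then holds whenever $\tfrac{\alpha L\|X^TX\|}{2}\|\bp\|^2 \leq -(1-\sigma_1)\Delta$, and substituting the bound \eqref{eq:deltabound} on $\Delta$ shows this is guaranteed for every $\alpha \leq \tfrac{2(1-\sigma_1)c_2}{L\|X^TX\|(1+\eta^{S/2})}$.

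Given this threshold, finite termination is immediate: the geometric backtracking $\alpha = \theta^i$ must eventually drop below it. To obtain the explicit lower bound \eqref{eq:linesearchbound2}, I would use the standard argument that either the unit step $\alpha=1$ is accepted, or the accepted $\theta^i$ satisfies $\theta^{i-1}$ failing the test, whence $\theta^{i-1} > \tfrac{2(1-\sigma_1)c_2}{L\|X^TX\|(1+\eta^{S/2})}$ and therefore $\alpha = \theta^i > \tfrac{2\theta(1-\sigma_1)c_2}{L\|X^TX\|(1+\eta^{S/2})}$. The main subtlety, and the only place I anticipate having to be careful, is in part 1: ensuring that the triangle-inequality / strong-convexity argument is sharp enough to collapse the factor $(1+\eta^{S/2})^2$ in the denominator down to $(1+\eta^{S/2})$ via the algebraic identity $1-\eta^S = (1-\eta^{S/2})(1+\eta^{S/2})$, which is exactly the step that delivers the constant claimed in the lemma.
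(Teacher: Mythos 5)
Your proof is correct, and since the paper omits the proof of this lemma (deferring to the framework of Lee and Wright with the observation that $\nabla \tilde f$ is $\|X^T X\|L$-Lipschitz), your argument supplies exactly the intended reasoning: the identity $\Delta = Q(\bp) - \tfrac12\|\bp\|_H^2$ combined with $c_2$-strong convexity of $Q$, the inexactness bound \eqref{eq:approx}, and the triangle inequality to get \eqref{eq:deltabound}, then the descent lemma with constant $\|X^TX\|L$ plus convexity of $g$ and the standard backtracking argument for \eqref{eq:linesearchbound2}. Your reading of $\|d\|$ as the norm of the computed direction $\bp$ is also the intended one.
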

%Note that this is just a worst-case guarantee.
This result is just a worst-case guarantee; in practice we often
observe that the line search procedure terminates with $\alpha
=1$ for our choice of $H$, as we see  in our experiments.
%% This will be verified empirically in our
%% experiments. We thus do not expand the lower bound for $\bar \alpha$
%% in the later analysis.

Next, we analyze communication complexity of
Algorithm~\ref{alg:proximalbfgs}.
\begin{theorem}
If we apply Algorithm~\ref{alg:proximalbfgs} to solve \eqref{eq:f},
and Algorithm~\ref{alg:sparsa} is run for $S$ iterations at each main
iteration, then the following claims hold.
\begin{itemize}[
topsep=0pt,
partopsep=0pt,
leftmargin=*,
itemsep=0pt,
parsep=0pt
]
\item
  Suppose that the following variant of strong convexity holds: There
  exists $\mu > 0$ such that for any $\bw$ and any $a \in [0,1]$, we have
\begin{align}
\label{eq:strong}
&~F\left(a \bw + \left(1 - a\right)
P_{\Omega}\left(\bw\right)\right)\\
\leq &~a F\left(\bw\right) +
\left(1 - a\right) F^* - \frac{\mu a\left( 1 - a \right)}{2}
\left\|\bw - P_{\Omega}\left(\bw\right)\right\|^2,
\nonumber
\end{align}
where $F^*$ is the optimal objective value of \eqref{eq:f}, $\Omega$
is the solution set, and $P_{\Omega}$ is the projection onto this set.
Then Algorithm~\ref{alg:proximalbfgs} converges globally at a Q-linear
rate. That is, 
\begin{align}
\nonumber
\frac{F\left(\bw_{t+1}\right) - F^*}{F\left( \bw_t \right) - F^*}
%&\leq
%1 - \frac{\alpha_t \sigma_1 \left(1 -\eta^S \right)\mu}{\mu + c_1}\\
&\leq
1 - \frac{\bar \alpha \sigma_1 \left(1 -\eta^S \right)\mu}{\mu +
c_1},\quad \forall t.
\label{eq:qlinear}
\end{align}
Therefore, to get an approximate solution of \eqref{eq:f} that
is $\epsilon$-accurate in the sense of objective value, we need to
perform at most
\begin{equation}
	O\left(\frac{\mu + c_1}{\mu \sigma_1 \bar{\alpha} \left( 1 -
\eta^S \right)} \log\frac{1}{\epsilon} \right)
\label{eq:commlinear}
\end{equation}
rounds of $O(d)$ communication.
\item When $F$ is convex, and the level set defined by $\bw_0$ is
  bounded, define
\begin{equation*}
R_0 \coloneqq \sup_{\bw: F\left( \bw \right) \leq F\left( \bw_0
	\right)}\quad\left\|\bw - P_\Omega(\bw)\right\|.
\end{equation*}
Then we obtain the following expressions for rate of convergence of
the objective value.
\begin{compactenum}
\item When $F(\bw_t) - F^* \geq c_1 R_0^2$,
\begin{equation*}
\frac{F(\bw_{t+1}) - F^*}{F(\bw_t) - F^*} \leq 1 - \frac{\left(
	1 - \eta^S \right)\sigma_1 \bar{\alpha}}{2}.
\end{equation*}
\item Otherwise, we have globally for all $t$ that
\begin{align*}
	\frac{F\left(\bw_t\right) - F^*}{2} &\leq \frac{c_1 R_0^2 +
	F\left(\bw_0\right) - F^*}{ \sigma_1 t(1 -
	\eta^S)\bar \alpha}.
\end{align*}
\end{compactenum}
This implies a communication complexity of
\begin{equation*}
	\begin{cases}
	O\left( \frac{2}{\left( 1 - \eta^S\right) \sigma_1 \bar{\alpha}
}\log \frac{1}{\epsilon} \right) &\text{ if } \epsilon \geq
	c_1 R_0^2,\\
	\frac{2 \left( c_1 R_0^2 + F(\bw_0) - F^* \right)}{\sigma_1 (1 -
	\eta^S) \bar \alpha \epsilon}&\text{ else}.
\end{cases}
\end{equation*}
\item If $F$ is non-convex, the norm of the proximal gradient steps
\begin{equation*}
G_t \coloneqq \arg\min_\bp \nabla f\left(\bw_t\right)^T \bp +
\frac{\|\bp\|^2}{2}
+ g\left(\bw_t + \bp\right)
\end{equation*}
converge to zero at a rate of $O(1 / \sqrt{t})$ in the following sense:
\begin{align*}
\min_{0 \leq i \leq t }\left\|G_i\right\|^2 \leq \frac{F\left( \bw_0
	\right) - F^*}{\gamma \left( t+1 \right)} \frac{c_1^2\left(1 +
	\frac{1}{c_2} + \sqrt{1 - \frac{2}{c_1} +
	\frac{1}{c_2^2}}\right)^2}{2 c_2 \bar \alpha (1 -
	\eta^S)}.
\end{align*}
\end{itemize}
\end{theorem}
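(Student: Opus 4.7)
The plan is to build on the general inexact successive quadratic approximation framework of \citet{LeeW18a}, whose hypotheses are already verified by Lemmas \ref{lemma:sparsa} and \ref{lemma:delta}. The backbone of all three claims is the per-iteration sufficient decrease supplied by the Armijo test \eqref{eq:armijo}: combining \eqref{eq:armijo} with the bounds \eqref{eq:deltabound} and \eqref{eq:linesearchbound2} yields
\[
F(\bw_{t+1}) - F(\bw_t) \;\leq\; \sigma_1 \bar\alpha \Delta_t \;\leq\; -\frac{\sigma_1 \bar\alpha c_2}{1+\eta^{S/2}} \|\bp_t\|^2 .
\]
All three convergence statements are then obtained by comparing this guaranteed decrease against an appropriate upper bound on $F(\bw_t) - F^*$ phrased in terms of $\|\bp_t\|$ (or a related quantity), where the subproblem inexactness factor $(1-\eta^S)$ enters through \eqref{eq:approx} when one passes from the exact minimizer $\bp_t^*$ of $Q_{H_t}(\cdot;\bw_t)$ to the computed step $\bp_t$. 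Each main iteration costs $O(d)$ communication (Section~\ref{sec:alg}), so the communication complexities follow from the iteration complexities.

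For claim (1), I would use the upper eigenvalue bound $H_t \preceq c_1 I$ together with the point $\bp = a(P_\Omega(\bw_t)-\bw_t)$ as a feasible candidate in the subproblem, and apply \eqref{eq:strong} to bound $Q_{H_t}(\bp_t^*;\bw_t)$ above by $-\mu/(\mu+c_1)\cdot (F(\bw_t)-F^*)$ after optimizing over $a\in[0,1]$. The inexactness bound \eqref{eq:approx} transfers this to the computed $\bp_t$ up to the factor $(1-\eta^S)$. Combining with the sufficient decrease and rearranging gives \eqref{eq:qlinear}, and iterating the Q-linear contraction produces \eqref{eq:commlinear}.

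For claim (2), I would replace the strong-convexity argument by convexity on the segment from $\bw_t$ to $P_\Omega(\bw_t)$, which yields
\[
F(\bw_t) - F^* \;\leq\; \nabla \tilde f(\bw_t)^T(\bw_t - P_\Omega(\bw_t)) + g(\bw_t) - g(P_\Omega(\bw_t)),
\]
and then bound the right-hand side using the definition of $Q_{H_t}$ and $H_t\preceq c_1 I$ to obtain $-Q_{H_t}(\bp_t^*;\bw_t)\geq \min\{(F(\bw_t)-F^*)/2,\,(F(\bw_t)-F^*)^2/(2c_1 R_0^2)\}$. This naturally splits into the two regimes according to whether $F(\bw_t)-F^*$ exceeds $c_1 R_0^2$: the large regime gives geometric contraction, while the small regime telescopes into the $O(1/t)$ rate. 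Multiplying iteration counts by $O(d)$ per-iteration communication yields the stated bounds.

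For claim (3) the strategy is twofold. First, telescope the sufficient decrease across iterations $0,\dotsc,t$ to obtain $\sum_{i=0}^{t}\|\bp_i\|^2 \leq (1+\eta^{S/2})(F(\bw_0)-F^*)/(\sigma_1 \bar\alpha c_2)$. Second, relate the identity-metric proximal gradient step $G_t$ to the $H_t$-metric step $\bp_t^*$ by comparing the two prox subproblems via firm non-expansiveness of the proximal operator together with the eigenvalue bounds $c_2 I \preceq H_t \preceq c_1 I$; this yields an inequality of the form $\|G_t\|\leq \kappa(c_1,c_2)\|\bp_t^*\|$ with exactly the constant $c_1(1+1/c_2+\sqrt{1-2/c_1+1/c_2^2})$ appearing in the statement. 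A final application of \eqref{eq:approx} passes from $\bp_t^*$ to $\bp_t$, and taking the minimum over $i\in\{0,\dotsc,t\}$ gives the $O(1/\sqrt{t})$ rate. The main obstacle I expect is this last comparison step: the conversion between the variable-metric step actually computed by the algorithm and the identity-metric $G_t$ appearing in the rate requires careful manipulation of the two sets of optimality conditions and a tight handling of both the inexactness and the metric mismatch, so that the resulting constant matches the one stated in the theorem rather than a loose surrogate.
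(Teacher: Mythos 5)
Your proposal is correct and takes essentially the same route as the paper: the paper itself omits a proof of this theorem, deferring entirely to the inexact successive-quadratic-approximation analysis of \citet{LeeW18a} with the constants supplied by Lemmas~\ref{lemma:sparsa} and~\ref{lemma:delta} and the Lipschitz constant $\|X^T X\|L$, which is exactly the framework you invoke and flesh out (candidate step $a(P_\Omega(\bw_t)-\bw_t)$ with optimization over $a$ for the strongly convex and convex cases, telescoped sufficient decrease plus the $G_t$-versus-variable-metric-step comparison for the nonconvex case). Your sketch is in fact more detailed than what the paper provides.
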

Note that it is known that the norm of $G_t$ is zero if and only if
$\bw_t$ is a stationary point \citep{LeeW18a}, so this measure serves
as a first-order optimality condition.

 Our computational experiments cover the case of $F$ convex; 
%% In our experiment, we mainly cover the case that $F$ is convex, while
%% convergence result for nonconvex $F$ suggests the wider application of
%% our algorithm, but due to the space limitation,
exploration of the method on nonconvex $F$ is left for future work.

\section{Related Works}
\label{sec:related}
The framework of using \eqref{eq:quadratic} to generate update
directions for optimizing \eqref{eq:f} has been discussed in existing
works with different choices of $H$, but always in the single-core
setting. \citet{LeeSS14a} focused on using $\nabla^2 \tilde f$ as $H$,
and proved local convergence results under certain additional
assumptions.  In their experiment, they used AG to solve
\eqref{eq:quadratic}.  However, in distributed environments, using
$\nabla^2 \tilde f$ as $H$ incurs an $O(d)$ communication cost per AG
iteration in solving \eqref{eq:quadratic}, because computation of the
term $\nabla^2 \tilde f(\bw) \bp = X \nabla^2 f(X^T \bw) X^T \bp$
requires one {\em allreduce} operation to calculate a weighted sum of
the columns of $X$.

\citet{SchT16a} and \citet{GhaS16a} showed global convergence rate
results for a method based on \eqref{eq:quadratic} with bounded $H$,
and suggested using randomized coordinate descent to solve
\eqref{eq:quadratic}.  In the experiments of these two works, they
used the same choice of $H$ as we do in this paper, with CD as the
solver for \eqref{eq:quadratic}, which is well suited to their
single-machine setting.  Aside from our extension to the distributed
setting and the use of SpaRSA, the third major difference between their
algorithm and ours is that they do not conduct line search on the step
size.  Instead, when the obtained solution with a unit step size does
not result in sufficient objective value decrease, they add a scaled
identity matrix to $H$ and solve \eqref{eq:quadratic} again starting
from $\bp^{(0)} = 0$. The cost of repeatedly solving \eqref{eq:quadratic}
from scratch can be high, which results in an algorithm with higher
overall complexity. This potential inefficiency is  exacerbated further by the inefficiency
of coordinate descent in the distributed setting.
% not very efficient to consider coordinate descent in distributed
% environments when the dimension $d$ is huge, because in that case we
% need all machines to repeat identical computations and thus the
% additional computational power of multiple machines is wasted.

Our method can be considered as a special case of the algorithmic
framework in \citet{LeeW18a, BonLPP16a}, which both focus on
analyzing the theoretical guarantees under various conditions. In the
experiments of \citet{BonLPP16a}, $H$ is obtained from the diagonal entries of
$\nabla^2 \tilde f$, making the subproblem
\eqref{eq:quadratic} easy to solve, but this simplification does not
take full advantage of curvature information.
%% to solve an image deconvolution problem, but in that
%% case the problem is easy to solve and not much curvature information
%% can be extracted.
Although our theoretical convergence analysis follows directly from
\citet{LeeW18a}, that paper does not provide details of experimental results
or implementation, and its analysis focuses on general $H$
rather than the LBFGS choice we use here.
%We propose a feasible and practical choice of $H$ for distributed
%\CONG{What's the other paper doing? It's not clear from here what's the
%difference between your paper and the other guys' paper.}
%The major contribution of this work is a realistic application of
%these theoretical frameworks, giving a detailed feasible and
%communication-efficient solution to optimizing \eqref{eq:f} in
%distributed environments.
%\CONG{I think some of this paragraph can be shifted to the introduction so that we know the contribution early on. You might also want to describe you work a little more generously than just `realistic application'. I might make this change on the next pass.}

Some methods consider solving \eqref{eq:f} in a distributed
environment where $X$ is partitioned feature-wise (i.e. along rows
instead of columns). There are two potential disadvantages of this
approach.  First, new data points can easily be assigned to one of the
machines in our approach, whereas in the feature-wise approach, the
features of all new points would need to be distributed around the
machines.  Second, local curvature information is obtained, so the
update direction can be poor if the data is distributed nonuniformly
across features. (Data is more likely to be distributed evenly across
instances than across features.)  In the extreme case in which each
machine contains only one row of $X$, only the diagonal entries of the
Hessian can be obtained locally, so the method reduces to a scaled
version of proximal gradient.

\section{Numerical Experiments}
\label{sec:exp}
We investigate the empirical performance of
Algorithm~\ref{alg:proximalbfgs} in solving $\ell_1$-regularized
logistic regression problems. The code used in our experiment is
available at \url{http://github.com/leepei/dplbfgs/}.  Given training
data points $(\bx_i, y_i) \in \R^d \times \{-1,1\}$ for
$i=1,\dotsc,n$, the objective function is
\begin{equation}
F(\bw) = C\sum_{i=1}^n \log\left(1 + e^{-y_i \bx_i^T
	\bw}\right) + \|\bw\|_1,
\label{eq:logistic}
\end{equation}
where $C > 0$ is a parameter prespecified to trade-off between
the loss term and the regularization term. We fix $C$ to $1$
for simplicity in our experiments.
%Note that since the loss function
%is strongly convex with respect to $X^T \bw$ and the regularizer can
%be reformulated as a polyhedron constrained linear term, one can
%easily show that \eqref{eq:logistic} satisfies the modified strong
%convexity property \eqref{eq:strong}.  Therefore, our algorithm enjoys
%global linear convergence on this problem.
%% , which also indicates better
%% communication efficiency.
We consider the publicly available binary classification data sets
listed in Table~\ref{tbl:data}\footnote{Downloaded from
  \url{https://www.csie.ntu.edu.tw/~cjlin/libsvmtools/datasets/}.},
and partitioned the instances evenly across machines.

The parameters of our algorithm were set as follows:
$\theta = 0.5$, $\beta = 2$, $\sigma_0 = 10^{-2}$, $\sigma_1 = 10^{-4}$,
$m=10$, $\delta = 10^{-10}$.
The parameters in SpaRSA follow the setting in \cite{WriNF09a},
$\theta$ is set to halve the step size each time, the value of
$\sigma_0$ follows the default experimental setting of
\cite{LeeWCL17a}, $\delta$ is set to a small enough number, and $m=10$
is a common choice for LBFGS.

\begin{table}[tb]
\caption{Data statistics.}
\label{tbl:data}
\centering
\begin{tabular}{@{}l|rr|rr}
Data set & $n$ & $d$ & \#nonzeros\\
\hline
news & 19,996 & 1,355,191 & 9,097,916 \\
epsilon & 400,000 & 2,000 & 800,000,000\\
webspam & 350,000 & 16,609,143 & 1,304,697,446\\
avazu-site & 25,832,830 & 999,962 & 387,492,144
\end{tabular}
\end{table}

We ran our experiments on a local cluster of $16$ machines running
MPICH2, and all algorithms are implemented in C/C++.  The inversion of
$M$ defined in \eqref{eq:M} is performed through LAPACK
\citep{And99a}.  The comparison criteria are the relative objective
error $(F(\bw) - F^*) / F^*$,
versus either the amount communicated (divided by $d$) or the overall
running time.  The former criterion is useful in estimating the
performance in environments in which communication cost is extremely
high.

\subsection{Effect of Inexactness in the Subproblem Solution}
We first examine how the degree of inexactness of the approximate
solution of subproblems \eqref{eq:quadratic} affects the convergence
of the overall algorithm. Instead of treating SpaRSA as a steadily
linearly converging algorithm, we take it as an algorithm that
sometimes decreases the objective much faster than the worst-case
guarantee, thus an adaptive stopping condition is used.  In
particular, we terminate Algorithm \ref{alg:sparsa} when the norm of
the current update step is smaller than $\epsilon_1$ times that of the
first update step, for some prespecified $\epsilon_1 > 0$.  From the
proof of Lemma~\ref{lemma:sparsa}, the norm of the update step bounds
the value of $Q(\bp) - Q^*$ both from above and from below, and thus
serves as a good measure of the solution precision.  In
Table~\ref{tbl:stop}, we compare runs with the values $\epsilon_1 =
10^{-1}, 10^{-2}, 10^{-3}$. For the datasets news20 and webspam, it is
as expected that tighter solution of \eqref{eq:quadratic} results in
better updates and hence lower communication cost. This may not result
in a longer convergence time. As for the dataset epsilon, which has a
smaller data dimension $d$, the $O(m)$ communication cost per SpaRSA
iteration for calculating $\nabla \tilde f$ is significant in
comparison. In this case, setting a tighter stopping criteria for
SpaRSA can result in higher communication cost and longer running
time.

In Table~\ref{tbl:steps}, we show the distribution of the step sizes
over the main iterations, for the same set of values of $\epsilon_1$.  As
we discussed in Section~\ref{sec:analysis}, although the smallest
$\alpha$ can be much smaller than one, the unit step is usually
accepted.  Therefore, although the worst-case communication complexity
analysis is dominated by the smallest step encountered, the practical
behavior is much better.

\begin{table}[tb]
\caption{Different stopping conditions of SpaRSA as an approximate
solver for \eqref{eq:quadratic}. We show required amount of
communication (divided by $d$) and running time (in seconds) to reach
$F(\bw_t) - F^* \leq 10^{-3} F^*$.}
\label{tbl:stop}
\centering
	\begin{tabular}{l|r|r|r}
Data set & $\epsilon_1$ & Communication & Time\\
\hline
\multirow{3}{*}{news20} & $10^{-1}$ & 28 & 11 \\
& $10^{-2}$ & 25 & 11 \\
& $10^{-3}$ & 23 & 14 \\
%& $10^{-4}$ & 23 & 11 \\
\hline
\multirow{3}{*}{epsilon} & $10^{-1}$ & 144 & 45 \\
& $10^{-2}$ & 357 & 61 \\
& $10^{-3}$ & 687 & 60 \\
%& $10^{-4}$ & 1099 & 84 \\
\hline
\multirow{3}{*}{webspam} & $10^{-1}$ & 452 & 3254 \\
& $10^{-2}$ & 273 & 1814 \\
& $10^{-3}$ & 249 & 1419
%& $10^{-4}$ & 250 & 1427 \\
\end{tabular}
\end{table}
\begin{table}[tb]
\caption{Step size distributions.}
\label{tbl:steps}
\centering
\begin{tabular}{l|r|r|r}
Data set & $\epsilon_1$ & percent of $\alpha = 1$ & smallest $\alpha$\\
\hline
\multirow{3}{*}{news20} & $10^{-1}$ & $95.5\%$ & $2^{-3}$ \\
& $10^{-2}$ & $95.5\%$ & $2^{-4}$ \\
& $10^{-3}$ & $95.5\%$ & $2^{-3}$ \\
%& $10^{-4}$ & $94.1\%$ & $2^{-2}$ \\
\hline
\multirow{3}{*}{epsilon} & $10^{-1}$ & $96.8\%$ & $2^{-5}$ \\
& $10^{-2}$ & $93.4\%$ & $2^{-6}$ \\
& $10^{-3}$ & $91.2\%$ & $2^{-3}$ \\
\hline
%& $10^{-4}$ & $90.8\%$ & $2^{-4}$ \\
\multirow{3}{*}{webspam} & $10^{-1}$ & $98.5\%$ & $2^{-3}$ \\
& $10^{-2}$ & $97.6\%$ & $2^{-2}$ \\
& $10^{-3}$ & $97.2\%$ & $2^{-2}$
%& $10^{-4}$ & $97.3\%$ & $2^{-3}$ \\
\end{tabular}
\end{table}

\subsection{Comparison with Other Methods}
Now we compare our method with two state-of-the-art distributed
algorithms for \eqref{eq:f}.  In addition to a proximal-gradient-type
method that can be used to solve general \eqref{eq:f} in distributed
environments easily, we also include one solver specifically designed
for $\ell_1$-regularized problems in our comparison.  These methods
are:
\begin{itemize}[
topsep=0pt,
partopsep=0pt,
leftmargin=*,
itemsep=0pt,
parsep=0pt
]
	\item DPLBFGS: our Distributed Proximal LBFGS approach. We fix
		$\epsilon_1 = 10^{-2}$ in this experiment.
	\item SPARSA \citep{WriNF09a}: the method described in Section~\ref{subsec:sparsa}, but applied directly to
		\eqref{eq:f}.
	\item OWLQN \citep{AndG07a}: an orthant-wise quasi-Newton method
		specifically designed for $\ell_1$-regularized problems.
		We fix $m=10$ in the LBFGS approximation.
\end{itemize}
We implement all methods in C/C++ and MPI. Note that the AG method
\citep{Nes13a} can also be used, but its empirical performance has been
shown to be similar to SpaRSA \citep{YanZ11a} and it requires strong
convexity and Lipschitz parameters to be estimated, which induces an
additional cost.
A further examination on different values of $m$ indicates that
  convergence speed of our method improves with larger $m$,
  % as the approximation gets closer to the real Hessian,
while in OWLQN, larger $m$  usually does not
lead to better results.
We use the same value of $m$ for both
methods and choose a value that favors OWLQN.

The results are provided in Figure~\ref{fig:compare}. 
Our method is always the fastest in both criteria.  For
epsilon, our method is orders of magnitude faster, showing that
correctly using the curvature information of the smooth part is indeed
beneficial in reducing the communication complexity.

It is possible to include specific heuristics for
$\ell_1$-regularized problems, such as those applied in
\citet{YuaHL12a, KaiYDR14a}, to further accelerate our method,
and the exploration on this direction is an interesting topic for
future work.
% but our focus
% we mainly want to demonstrate the advantage of our method in terms of
%here is on the communication complexity of our approach.\footnote{SJW: This seems like a lame justification. Could it be improved or omitted?}
% and do not want to complicate the factors.

\begin{figure}[tb]
\centering
\begin{tabular}{cc}
	Communication & Time\\
	\multicolumn{2}{c}{news20}\\
	\includegraphics[width=.42\linewidth]{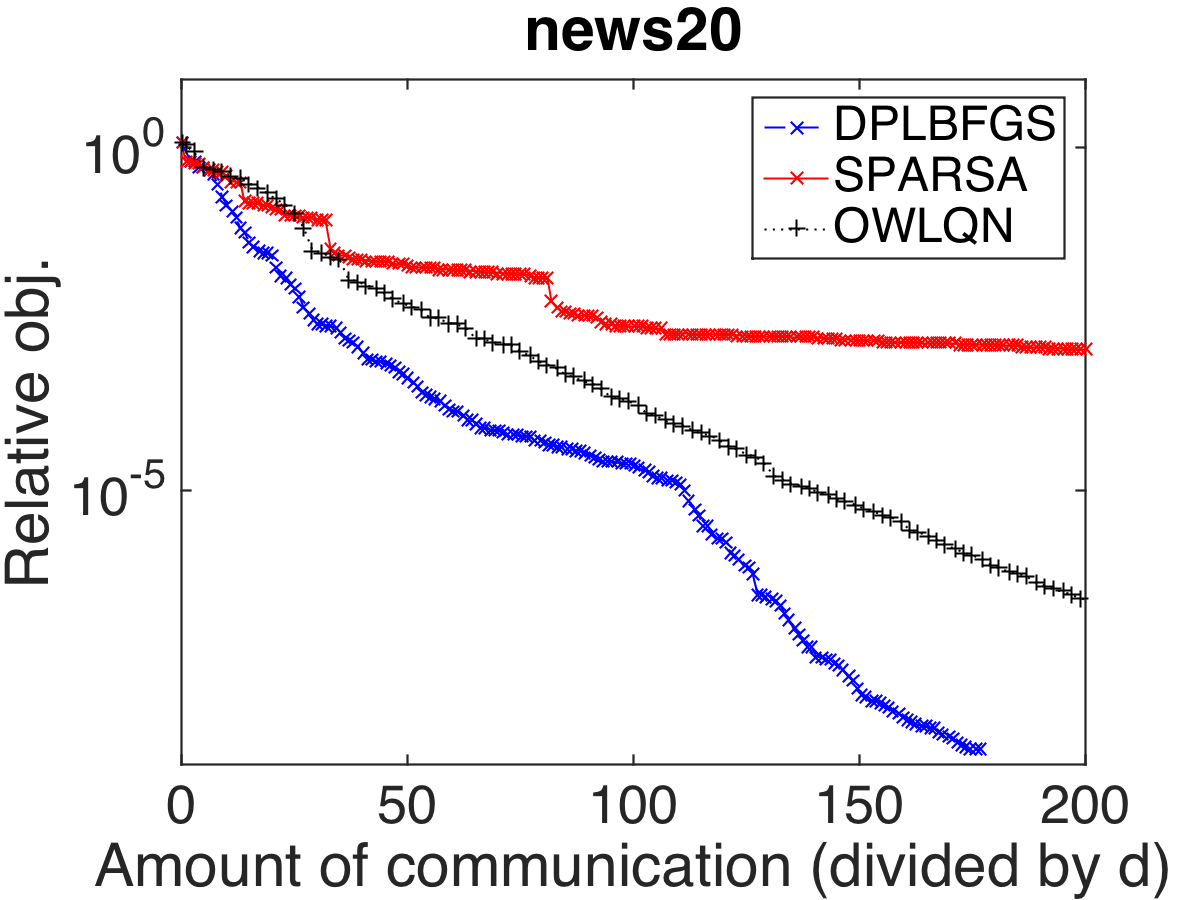}&
	\includegraphics[width=.42\linewidth]{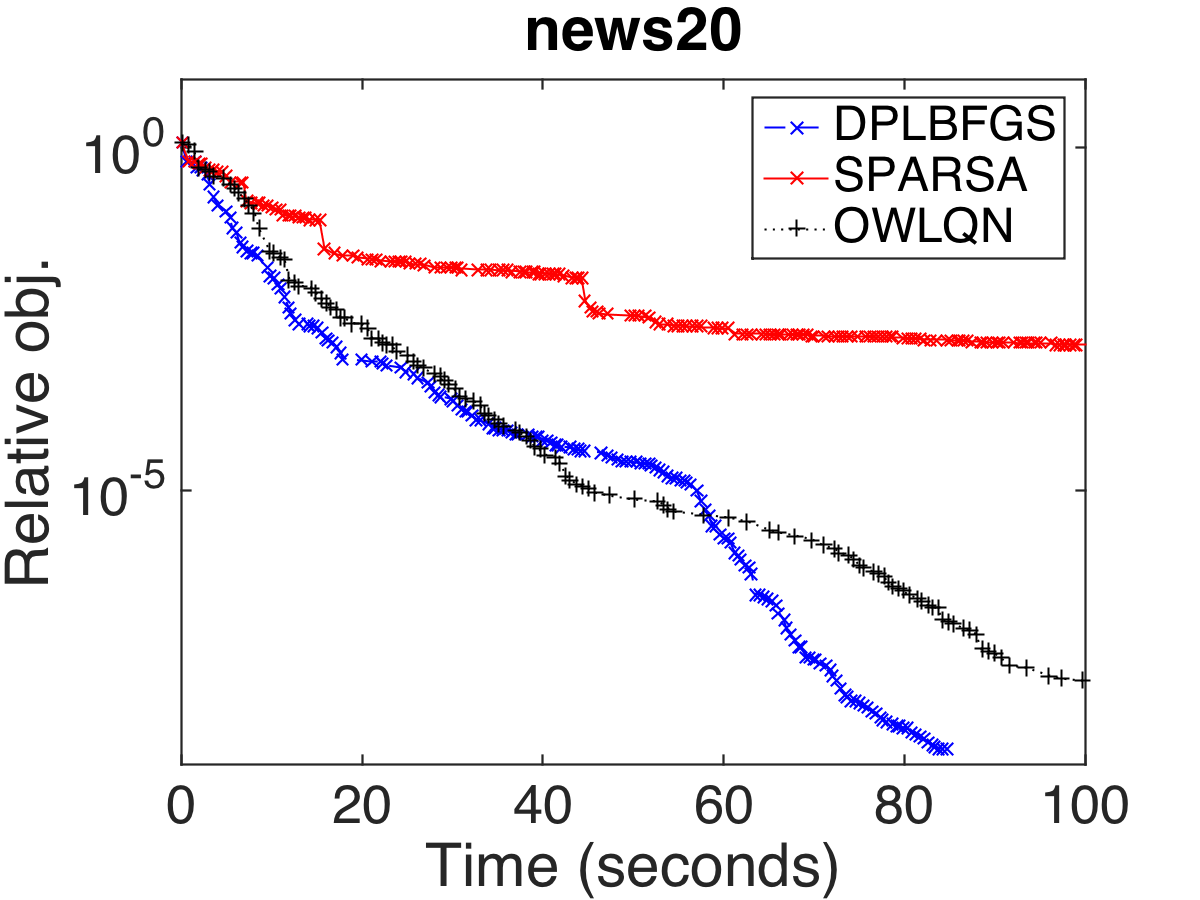}\\
	\multicolumn{2}{c}{epsilon}\\
	\includegraphics[width=.42\linewidth]{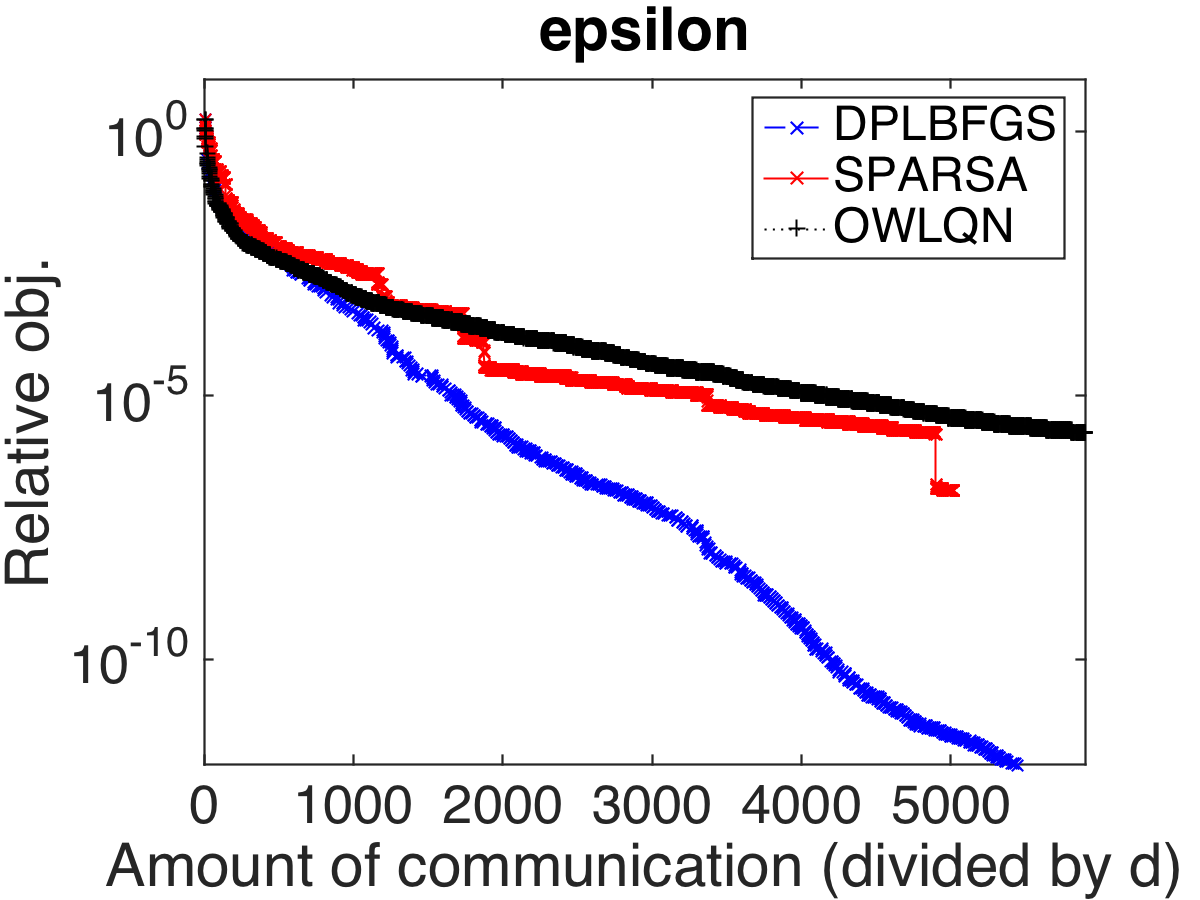}&
	\includegraphics[width=.42\linewidth]{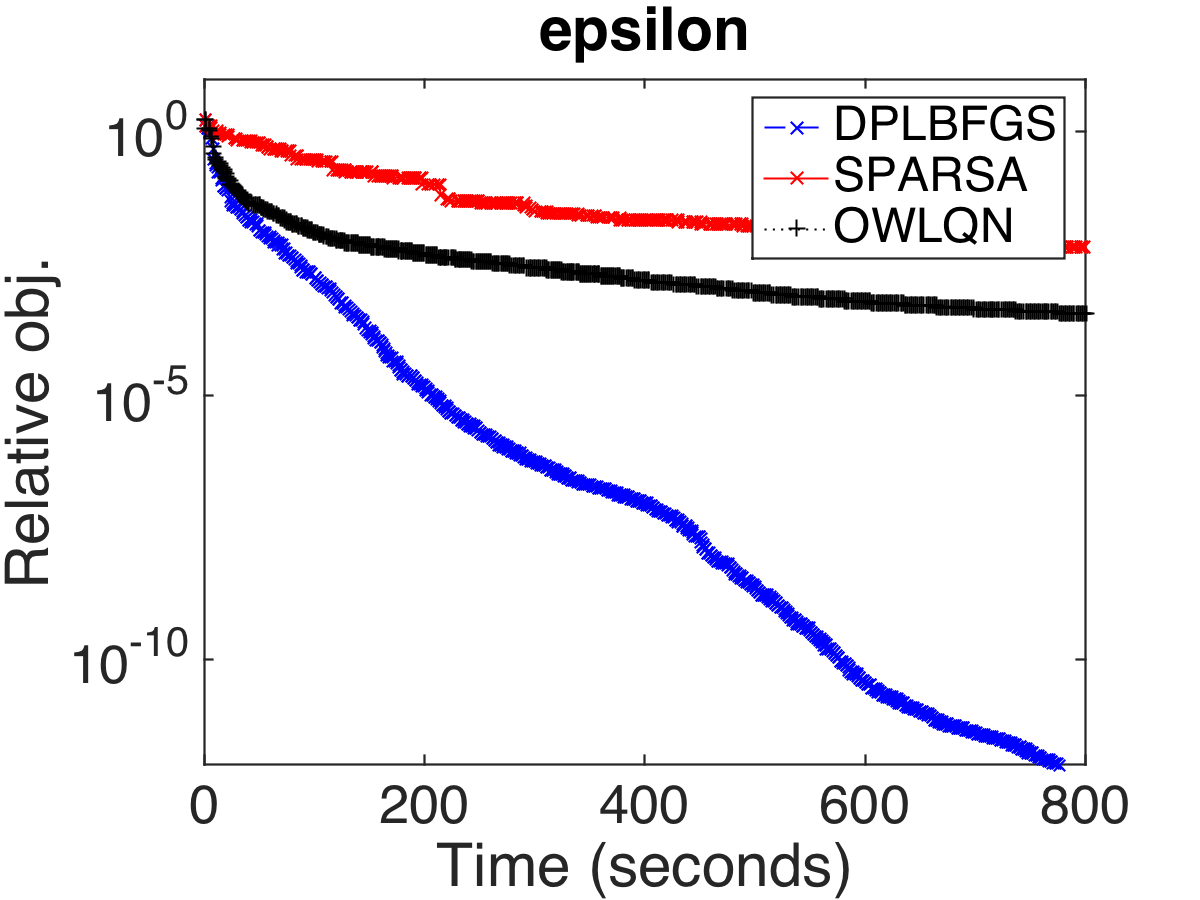}\\
	\multicolumn{2}{c}{webspam}\\
	\includegraphics[width=.42\linewidth]{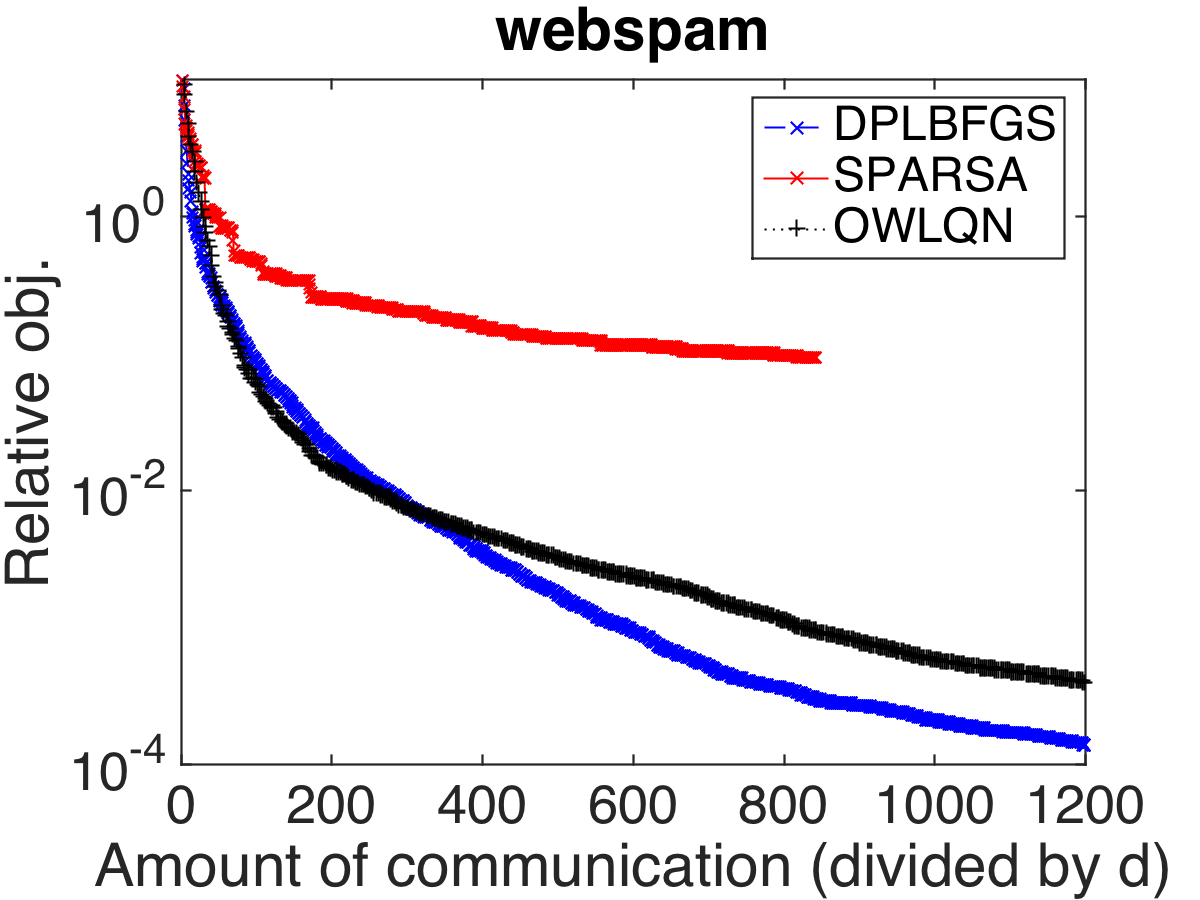}&
	\includegraphics[width=.42\linewidth]{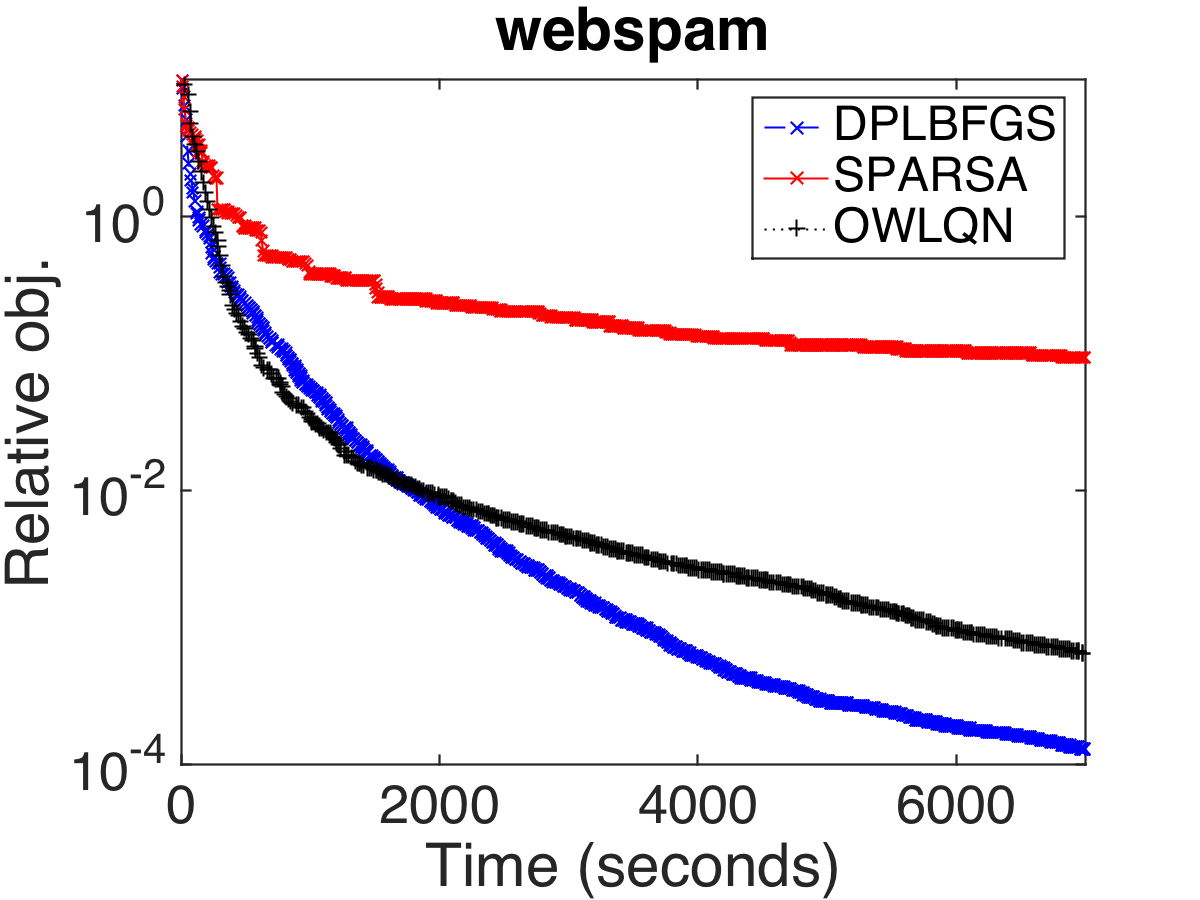}\\
	\multicolumn{2}{c}{avazu-site}\\
	\includegraphics[width=.42\linewidth]{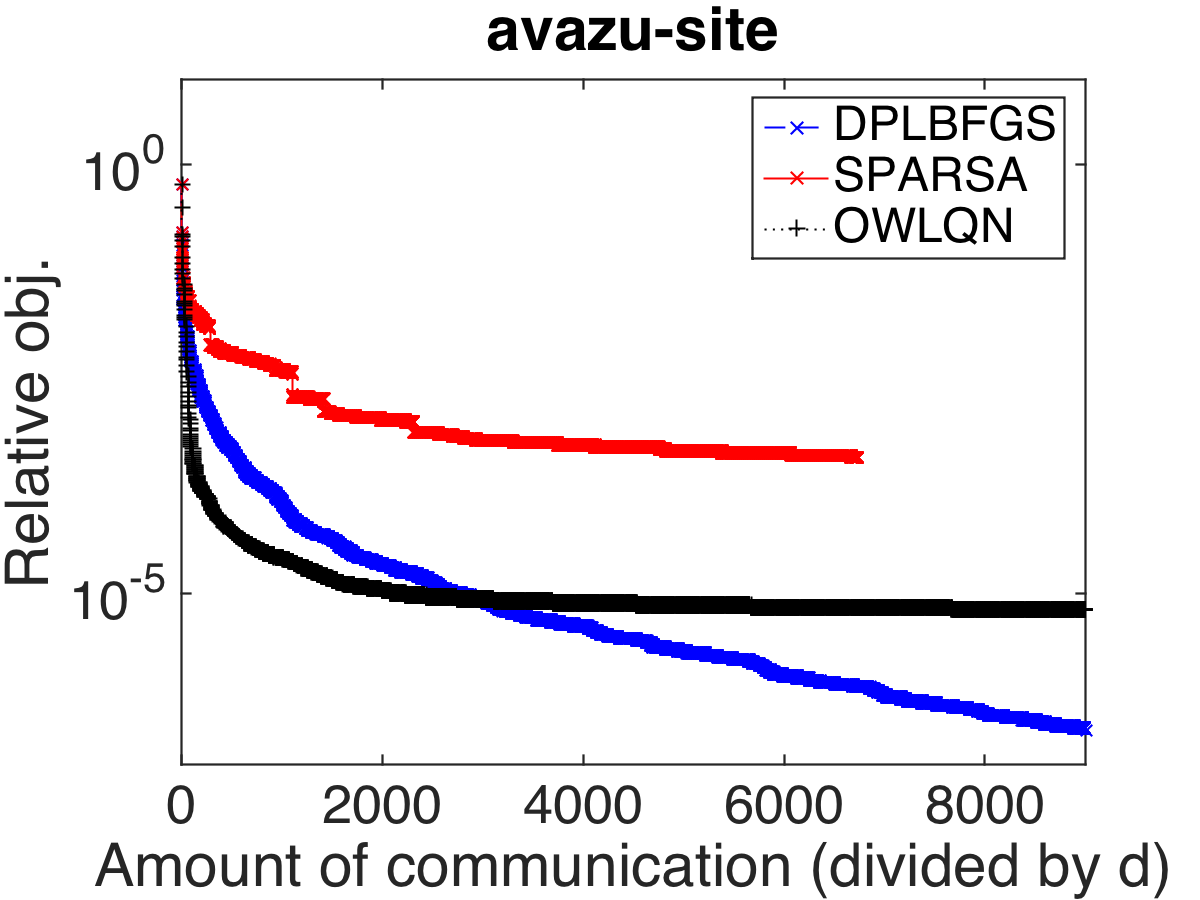}&
	\includegraphics[width=.42\linewidth]{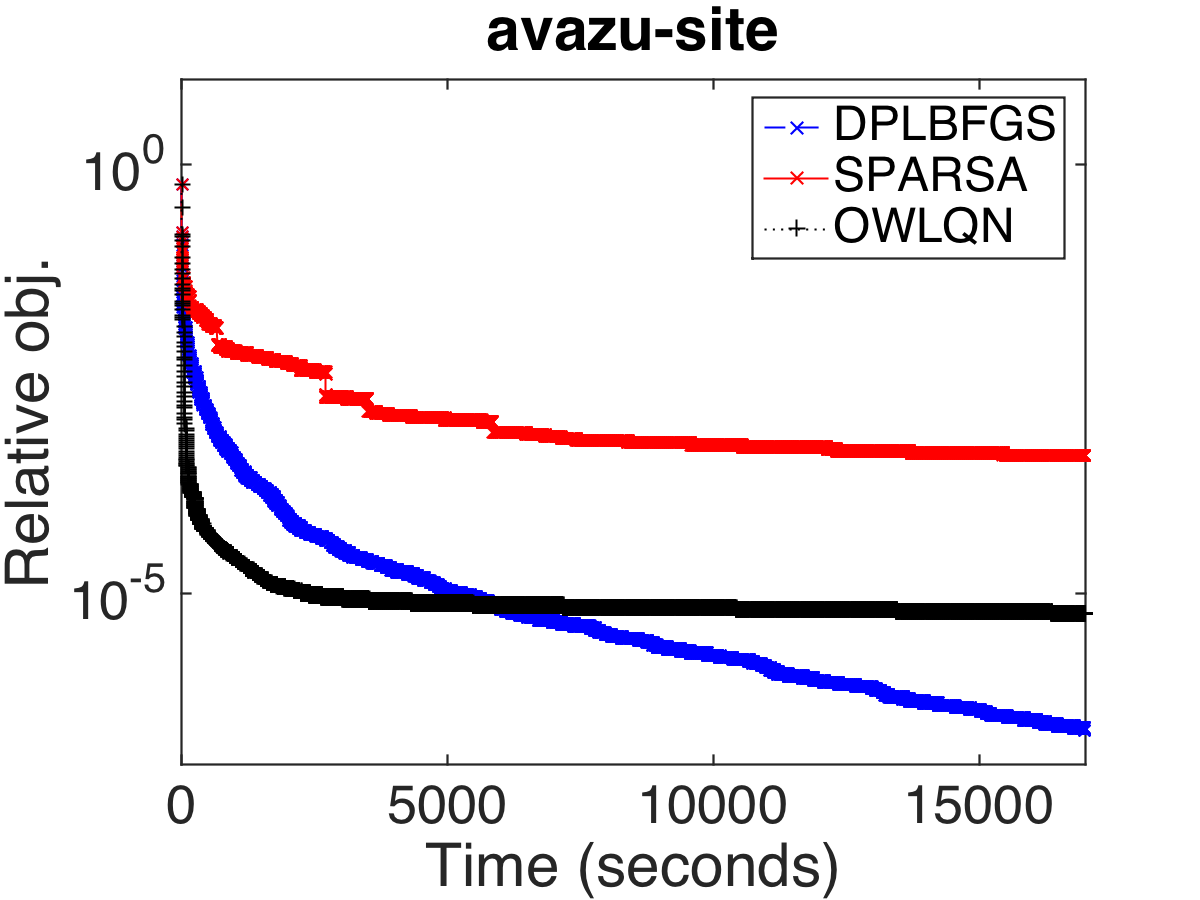}
\end{tabular}
	\caption{Comparison between different methods for $\ell_1
		$-regularized logistic regression in terms of relative
		objective difference to the optimum. Left: communication
	(divided by $d$); right: running time (in seconds).}
\label{fig:compare}
\end{figure}

\section{Conclusions} \label{sec:conclusions}
In this work, we propose a practical and communication-efficient
distributed algorithm for solving general regularized nonsmooth ERM
problems.  Our algorithm enjoys fast performance both theoretically
and empirically and can be applied to a wide range of ERM problems.
It is possible to extend our approach for solving the distributed dual
ERM problem with a strongly convex primal regularizer, and we expect
our framework to outperform state of the art, which only uses
block-diagonal parts of the Hessian that can be computed locally.
These topics are left for future work.

\section*{Acknowledgement}
This work was supported by NSF Awards IIS-1447449, 1628384, 1634597,
and 1740707; AFOSR Award FA9550-13-1-0138; and Subcontract 3F-30222
from Argonne National Laboratory.
The authors thank En-Hsu Yen for fruitful discussions.
\bibliographystyle{./ACM-Reference-Format}
\bibliography{inexactprox,distributederm}
%\end{bibunit}
%\begin{bibunit}[ACM-Reference-Format]
\appendix
\section{Proofs}
We provide proof of Lemma~\ref{lemma:sparsa} in this section.
The rest of Section~\ref{sec:analysis} directly follows the results in
\citet{LeeW18a} by noting that $\nabla \tilde f(\bw)$ is $\|X^T X\|
L$-Lipschitz continuous, and are therefore omitted.
\begin{proof}[Proof of Lemma~\ref{lemma:sparsa}]
We prove the three results separately.
\begin{enumerate}
\item The boundedness of $H_t$ directly follow from the results in
	\citet{LiF01a}.  A more detailed proof can be found in, for
	example, \citet[Appendix E]{LeeW17a}.
	The lower bound of $\gamma_t$ is directly through
	\eqref{eq:safeguard}, and the upper bound is from the Lipschitz
	continuity of $\nabla \tilde f$.
\item By directly expanding $\nabla \hat f$, we have that for
	any $\bp_1, \bp_2$,
	\begin{align*}
	\nabla \hat f(\bp_1) - \nabla \hat f(\bp_2)
	&= \nabla \tilde f(\bw) + H \bp_1 - \left(\nabla \tilde f(\bw) +
	H \bp_2\right)\\
	&= H (\bp_1 - \bp_2).
	\end{align*}
	Therefore, we have
	\begin{equation*}
		\frac{\left(\nabla \hat f(\bp_1) - \nabla \hat
			f(\bp_2)\right)^T \left( \bp_1 - \bp_2 \right)}{\left\|\bp_1 -
			\bp_2\right\|^2}
			= \frac{\left\|\bp_1 - \bp_2\right\|_H^2}{\left\|\bp_1 -
			\bp_2\right\|^2}
			\in \left[c_2, c_1\right]
	\end{equation*}
	for bounding $\psi_i$ for $i > 0$, and the bound for $\psi_0$ is
	directly from the bounds of $\gamma_t$.
	The combined bound is therefore $[\min\{c_2, \delta\}, \max \{c_1,
	\|X^T X\| L\}]$.
	Next, we show that the final $\psi_i$ is always upper-bounded.
	The right-hand side of \eqref{eq:dk} is equivalent to the
	following:
	\begin{equation}
	\arg \min_\bd \hat Q_{\psi_i}\left(\bd\right) \coloneqq \nabla \hat f(\bp^{(i)})^T \bd +
		\frac{\psi_i \left\|\bd\right\|^2}{2}  + \hat g\left(\bd +
		\bp\right) - \hat g\left(\bp\right).
		\label{eq:hatQ}
	\end{equation}
	Denote the optimal solution by $\bd^*$, then we have $\bp^{(i+1)}
	= \bp^{(i)} + \bd^*$.
	Because $H$ is upper-bounded by $c_1$, we have that $\nabla \hat
	f$ is $c_1$-Lipschitz continuous.
	Therefore, using Lemma 12 of \citet{LeeW18a}, we get
	\begin{equation}
		\hat Q_{\psi_i}\left(\bd^*\right) \leq -\frac{\psi_i}{2} \left\|\bd^*\right\|^2.
		\label{eq:Qbound}
	\end{equation}
	We then have from $c_1$-Lipschitz continuity of $\nabla
	\hat f$ that
	%\STEVE{Should these $Q$ be $\hat{Q}$?}
	\begin{align*}
	&~Q\left(\bp^{(i+1)}\right) - Q\left(\bp^{(i)}\right)\\
	\leq&~ \nabla \hat f(\bp^{(i)})^T \left( \bp^{(i+1)} - \bp^{(i)}
	\right) + \frac{c_1}{2} \left\|\bp^{(i+1)} - \bp^{(i)}\right\|^2
	\\
	&\qquad + \hat g\left(\bp^{(i+1)}\right) - \hat g\left( \bp^{(i)}
	\right) \\
	\stackrel{\eqref{eq:hatQ}}{=}&~ \hat Q_{\psi_i}(\bd^*) -
	\frac{\psi_i}{2} \left\| \bd^* \right\|^2 + \frac{c_1}{2}
	\left\|\bd^*\right\|^2\\
	\stackrel{\eqref{eq:Qbound}}{\leq}&~ \left(\frac{c_1}{2} -
	\psi_i\right)\|\bd^*\|^2.
	\end{align*}
	Therefore, whenever
	\begin{equation*}
		\frac{c_1}{2} - \psi_i \leq -\frac{\sigma_0 \psi_i}{2},
	\end{equation*}
	\eqref{eq:accept} holds.
	This is equivalent to
	\begin{equation*}
		\psi_i \geq \frac{c_1}{2 - \sigma_0}.
	\end{equation*}
	Since $\sigma_0 \in (0,1)$, we must have $c_1 / (2 - \sigma_0) \in
	(c_1 / 2, c_1)$,
	Note that the initialization of $\psi_i$ is upper-bounded by
	$c_1$ for all $i > 1$, so the final $\psi_i$ is upper bounded by
	$2 c_1$.
	Together with the first iteration that we start with $\psi_0 =
	\gamma_t$,
	we have that $\psi_i$ are always upper-bounded by $\max\{ 2 c_1,
	\gamma_t\}$, and we have already proven $\gamma_t$ is
	upper-bounded by $\|X^T X\|L$.

	\item
	We note that since $Q$ is $c_2$-strongly convex,
	the following condition holds.
	\begin{equation}
\frac{\min_{\bs \in \nabla \hat f\left(\bp^{(i + 1)}\right) + \partial \hat
g\left( \bp^{(i+ 1)} \right)}
	\left\|\bs \right\|^2}{2 c_2} \geq Q\left(\bp^{(i + 1)}\right) - Q^*
	\label{eq:kl}
	\end{equation}
	On the other hand, from the optimality condition of
	\eqref{eq:hatQ}, we have that
	\begin{equation}
		\psi_i \bd^* = \nabla \hat f\left(\bp^{(i)}\right) + \bs_{i+1},
		\label{eq:dopt}
	\end{equation}
	for some
	\begin{equation*}
	\bs_{i+1} \in \partial \hat g\left( \bp^{(i+1)}\right).
	\end{equation*}
	Therefore,
	\begin{align}
		\nonumber
		&~Q\left( \bp^{(i+1)} \right) - Q^*\\
		\nonumber
		\stackrel{\eqref{eq:kl}}{\leq}&~ \frac{1}{2 c_2}
		\left\|\nabla \hat f\left( \bp^{(i+1)} \right) - \nabla \hat
		f\left( \bp^{(i)} \right) + \nabla \hat f\left( \bp^{(i)}
		\right) + \bs_{i+1}\right\|^2\\
		\nonumber
		\stackrel{\eqref{eq:dopt}}{\leq}&~ \frac{1}{c_2} \left\|
			\nabla \hat f\left( \bp^{(i+1)} \right)
		- \nabla \hat f\left( \bp^{(i)} \right) \right\|^2 + \left\|\psi_i
		\bd^*\right\|^2\\
		\label{eq:dbound}
		\leq&~ \frac{1}{c_2} \left( c_1^2 + \psi^2 \right)
		\left\|\bd^*\right\|^2.
	\end{align}
%        \STEVE{Should $c_1$ be $c_1^2$? Should $\psi$ be $psi_i$?}
	By combining \eqref{eq:accept} and \eqref{eq:dbound}, we obtain
	\begin{align*}
		Q\left( \bp^{(i+1)} \right) - Q\left( \bp^{(i)} \right)
		&\leq -\frac{\sigma_0 \psi_i}{2}\left\|\bd^*\right\|^2\\
		&\leq -\frac{\sigma_0 \psi_i}{2} \frac{c_2}{c_1^2 + \psi^2}
		\left(Q\left( \bp^{(i+1)} \right) - Q^*\right).
	\end{align*}
	Rearranging the terms, we obtain
	\begin{equation*}
		\left(1 + \frac{c_2\sigma_0 \psi_i}{2 ( c_1^2 +
		\psi^2)}\right) \left(Q\left( \bp^{(i+1)} \right) - Q^*\right)
		\leq Q\left( \bp^{(i)} \right) - Q^*,
	\end{equation*}
	showing Q-linear convergence of SpaRSA,
	with
	\begin{equation*}
		\eta = \left( 1 + \frac{c_2 \sigma_0 \psi_i}{2 \left( c_1^2
			+ \psi_i^2
		\right)} \right)^{-1} \in [0,1].
		\qedhere
	\end{equation*}
\end{enumerate}
\end{proof}

%\putbib[../inexactprox,distributederm]
%\end{bibunit}

\end{document}